\crefname{equation}{}{} 
\newcommand{\rvs}[1]{{\color{black}#1}}
\def\argmin{\mathop{\rm argmin}}
\def\dR{\mathbb{R}}
\def\avg{\mathrm{avg}}
\def\eig{\mathrm{eig}}
\newcolumntype{C}{>{$\displaystyle}c<{$}}
\newtheorem{rmk}{Remark}[section] 
\newtheorem{theorem}{Theorem}[section]
\newtheorem{lemma}{Lemma}[section]
\newtheorem{proposition}{Proposition}[section]
\title{Understanding Limitation of Two Symmetrized Orders by Worst-case Complexity}
\author{
Peijun Xiao
\thanks{Coordinated Science Laboratory, Department of ISE, University of Illinois at Urbana-Champaign, Urbana, IL 
  (\texttt{peijunx2@illinois.edu}).}
\and 
Zhisheng Xiao
\thanks{Computational and Applied Mathematics, University of Chicago
  (\texttt{zxiao@uchicago.edu}).}
\and 
Ruoyu Sun
\thanks{Coordinated Science Laboratory, Department of ISE, University of Illinois at Urbana-Champaign, Urbana, IL (\texttt{ruoyus@illinois.edu}).}
}
\begin{document}
\maketitle


\begin{abstract}

\rvs{Update order is one of the major design choices of  block decomposition algorithms.}
\rvs{There are at least two classes
of deterministic update orders: nonsymmetric (e.g. cyclic order) and symmetric (e.g. Gaussian back substitution or symmetric Gauss-Seidel)}.
\rvs{Recently}, Coordinate Descent (CD) with cyclic order was shown to be $O(n^2)$ times slower than
randomized versions in the worst-case. 
A natural question arises: can the symmetrized orders achieve faster convergence rates
than the cyclic order, or even getting close to the randomized versions? 
In this paper, we give a negative answer to this question. We show that both Gaussian \rvs{b}ack \rvs{s}ubstitution (GBS) and symmetric Gauss-Seidel (sGS)
suffer from the same slow convergence issue as the cyclic order in the worst case.  
In particular, we prove that for unconstrained problems, both GBS-CD and sGS-CD can be $\mathcal{O}(n^2)$ times slower than R-CD.
\rvs{Despite unconstrained problems,  we also empirically study} linearly constrained problems with quadratic objective: we empirically demonstrate that the convergence speed of GBS-ADMM and sGS-ADMM can be roughly $\mathcal{O}(n^2)$ times slower than randomly permuted ADMM.

\end{abstract}





\section{Introduction}

Block decomposition is a simple yet powerful idea for solving large-scale computational problems.
This idea is the key component of several popular methods such as
Coordinate Descent, Stochastic Gradient Descent (SGD)
and Alternating Direction Method of Multipliers (ADMM). \rvs{We first review the background of CD.}

\subsection{Background of Coordinate Descent Methods}


\rvs{CD} methods are iterative methods which update some coordinates of the variable vector while fixing the other coordinates at each \rvs{update}. 
CD is  a popular choice for solving large-scale optimization problems (see \cite{wright2015coordinate} for a survey), 
including glmnet package for LASSO \cite{friedman2010regularization},
 libsvm package for  support vector machine (SVM) \cite{platt1999fast,hsieh2008dual, chang2011libsvm}, 
tensor decomposition \cite{kolda2009tensor},  
resource allocation in wireless communications \cite{shi2011iteratively}, to name a few.

One of the major design choices for CD methods is the update rule. 
\rvs{
In CD methods, we partition the coordinates into $n$ blocks, and update these blocks (possibly multiple times) according to certain order at each epoch. 
More formally, for each epoch $k$, we denote 
$\sigma(k)$ as a finite sequence with values chosen from 
$\{1, 2, \dots, n \}$, often with length at least $n$. 
A natural choice of the update rule is the cyclic order (\rvs{a.k.a. Gauss-Seidel order}), where $\sigma(k) = (1, \cdots, n)$ for all $k$,
and the corresponding version of CD is called cyclic CD (C-CD).
}

In general, there are two  classes of update rules: deterministic  rules, and randomized  rules. 
For the deterministic update rules, the most basic variant is
\rvs{C-CD which is discussed above}; another popular one is symmetric Gauss-Seidel CD (a.k.a. double-sweep CD).
\rvs{In symmetric Gauss-Seidel CD, $\sigma(k) = (1, 2, \cdots, n-1, n, n-1, \cdots, 1)$ for each epoch index $k$.} 
\rvs{Inspired by Gaussian back substitution ADMM, we also consider GBS-CD which consists of a prediction step and a correction step, where the correction step is just one update in C-CD. 
See the details of the two algorithms in \cref{Algorithm: n-block sGS-CD} and \cref{Algorithm: n-block GBS-CD}.
Notice that the first half of the update in these two symmetric variants are the same as the update in the cyclic order.}
For randomized variants, a popular choice in academia is 
\rvs{randomized} CD (R-CD), and a more popular method in practice is randomly permuted CD (RP-CD) \rvs{\cite{shalev2013stochastic, yen2015sparse, chang2008coordinate, hsieh2008dual}}. 
\rvs{In R-CD, $\sigma(k)$ is obtained by randomly choosing an index from the set $\{1, \cdots, n\}$ with replacement for $n$ times.
In RP-CD, $\sigma(k)$ is a randomly picked permutation from the set of all permutations of $\{1, \cdots, n\}$.
In both cases, $\sigma(k)$ is chosen independently from the other epochs.} 
\rvs{In terms of convergence, \cite{powell1973search} shows C-CD diverges for certain nonconvex objectives (with $n \geq 3$), while the randomized version is convergent \cite{wright2015coordinate}. }





\subsection{Motivation}
While CD is observed to be much faster than gradient descent methods (GD)
\rvs{(e.g. see figures in \cite{gordon2015coordinate}
and \cite{sun2016worst})},
is there any theoretical evidence for this observation? 
\rvs{It has been shown that the complexity of R-CD 
is $\tau \triangleq
L / L_{\min}$ times better than that of GD, where
$L$ is the Lipschitz constant of the gradient
 and $L_{\min}$ is the minimal coordinate-wise Lipschitz constant
 of the gradient (see, e.g., \cite{leventhal2010randomized,nesterov2012efficiency}).}
For normalized problems (i.e., all coordinate-wise Lipschitz constants are equal), we have $\tau \in [1, n]$, which
\rvs{implies} that R-CD is $1$ to $n$ times faster than GD;
\rvs{see the figures in \cite{sun2016worst} for experiments
that match with this theoretical gap}.  
Since \rvs{the appearance of \cite{leventhal2010randomized,nesterov2012efficiency}}, most researchers have focused on randomized variants of CD \cite{shalev2013stochastic,richtarik2014iteration,lu2015complexity,zheng5873randomized,
    lin2015accelerated,zhang2017stochastic,fercoq2015accelerated,liu2015asynchronous,
    patrascu2015efficient,hsieh2015passcode}. 

An open question raised in  \cite{sun2016worst} is: does there exist a deterministic variant of CD that achieves similar convergence speed to R-CD? This question is interesting due to several reasons. First, the complexity of deterministic
algorithms is theoretically important, partly because we only have access to pseudo-randomness instead of randomness in practice.
Second, it is not always feasible or easy to randomly pick coordinates \rvs{for high dimensional problems due to time constraints or memory constraints \cite{coddington1997random, thomas2009comparison}}.
Third, understanding deterministic CD may help us better understand other algorithms such as ADMM,
as randomized versions for those methods can be difficult to analyze. 
\rvs{We will introduce ADMM and elaborate the third reason in details in Section \ref{sec: ADMM results}.}




\subsection{Main Contributions}\label{subsec: main contributions}

We mainly study the two symmetrized versions of Gauss-Seidel order: symmetric Gauss-Seidel rule and Gaussian Back Substitution rule. 
Both orders can be applied to any block decomposition type methods,
such as CD and ADMM. 
{\rvs We will provide rigorous proofs that 
the worst-case convergence rate of sGS-CD and GBS-CD
 is similar to C-CD,
 thus can be $O(n^2)$ times
 slower than R-CD. 
}
More specifically, the main contributions of this paper are summarized as below.
\begin{itemize}
    \item We prove that \rvs{for certain convex quadratic problems},
    sGS-CD and GBS-CD converge at least
    $\mathcal{O}(n)$ times slower than GD and $\mathcal{O}(n^2)$ times slower than R-CD.
    Upper bounds of these two methods are also proved for quadratic problems,
    indicating that these gaps are tight. 
    Therefore, these two symmetrized update orders are much slower than the randomized order in the worst case. 
  
     
     \item 
     \rvs{ For constrained problems, to illustrate the slow convergence of
   sGS-ADMM and GBS-ADMM, we 
propose a few examples such that in the experiments,}
GBS-ADMM and sGS-ADMM are $\mathcal{O}(n)$ times slower than the single-block method ALM and roughly $\mathcal{O}(n^2)$ times slower than RP-ADMM.

\end{itemize}

\subsection{Notation and Outline}\label{subsec: Outline}

$\quad $

\textbf{Notation}. 
Before we state the algorithms, we introduce the notation used in this paper. Throughout the paper, \rvs{given a matrix $A \in \dR^{n \times n}$,}
\rvs{l}et $\lambda_{\max}(A), \lambda_{\min}(A)$, and $\lambda_{\avg}(A)$ respectively  denote the maximum eigenvalue, minimum \emph{non-zero} eigenvalue, and average eigenvalue of $A$ respectively;
sometimes we omit the argument $A$ and just use $\lambda_{\max}$, $\lambda_{\min}$ and $\lambda_{\avg}$. We denote the set of the eigenvalues of a matrix $A$ as $\eig(A)$ (allowing repeated elements if an eigenvalue has multiplicity larger than $1$).
The condition number of $A$ is defined as  \rvs{ $\kappa(A) = \frac{\lambda_{\max}(A)}{\lambda_{\min}(A)}$, and we might just use $\kappa$ if the matrix is clear in the context}.

The less widely used notations are summarized below. An important notion is 
$\kappa_{\mathrm CD} = \frac{\lambda_{\mathrm avg}(A)}{\lambda_{\min}(A)}$,
which is the key parameter that determines the convergence speed of a block-decomposition method.
Further, we denote $A_{ij}$ as the $(i,j)$-th entry of $A$ and $L_i = A_{ii}$ as the $i$-th diagonal entry of $A$.
Denote $L_{\max} = \max_i L_i$ and $L_{\min} = \min_i L_i$ as the maximum/minimum coordinate-wise Lipschitz constant (i.e. maximum/minimum diagonal entry of $A$),
and $L_{\avg} = (\sum_{i=1}^n L_i)/n$ as the average of the diagonal entries of $A$ (which is also the average of the eigenvalues of $A$). We use $I$ to denote the identity matrix.

\textbf{Outline}.
The rest of the paper is organized as follows. 
\rvs{
In \cref{sec: CD algorithms}, we introduce the algorithms GBS-CD and sGS-CD and present the upper bounds and lower bounds of their convergence rates in \cref{sec: results}. 
We discuss how to symmetrize update matrices in \cref{sec: proof techniques} and present the proof of the lower bounds in \cref{sec: lower bound proofs}. The numerical experiments of GBS-CD and sGS-CD are given in \cref{sec:experiments}. 
In \cref{sec: ADMM results}, we introduce the symmetrization rules of ADMM and our results of ADMM.
In \cref{sec:conclusions}, we summarize the paper and discuss future research directions. Additional proofs of intermediate technical results and experiments are provided in the appendix.
}


\section{Coordinate Descent Results}\label{sec: CD algorithms}

\rvs{When applying CD to unconstrained problems, we focus on solving convex quadratic functions in the rest of the paper.}
\rvs{
Consider solving the following convex quadratic problem
\begin{equation}\label{least square optimization formulation for sGS-CD}
  \min_{x \in \dR^n }  \quad     \frac{1}{2} \|Ax - b\|^2.\\
\end{equation}}
\rvs{where $A \in \dR^{n \times n} $, $Q \triangleq A^TA$, $ Q_{ii} \neq 0, \ \forall i$ and $b \in \mathcal{R}(A)$.
We can assume $b \in \mathcal{R}(A) $, since otherwise the minimum value of the objective function will be $- \infty$. 
We can assume $Q_{ii} \neq 0, \ \forall i$, since when some $Q_{ii} = 0 $, all entries in the $i$-th row and the $i$-th column of $Q$ should be zero, which means that the $i$-th variable does not affect the objective and thus can be deleted.} 

\subsection{Review of the Update Matrix of C-CD Method}
\label{appen: C-CD update matrix}
\rvs{We first review how to derive the update matrix of C-CD as it will be used in the later discussion of GBS-CD and sGS-CD.}
\rvs{
Given the matrix $Q$, we first denote the lower triangular matrix as
\begin{equation}\label{eq: Gamma}
\Gamma \triangleq  \begin{bmatrix}
a_1^T a_1 &    0      &   \ldots  &   \ldots  & 0 \\
a_2^T a_1 & a_2^T a_2 &    0      &   \ldots  & 0 \\
a_3^T a_1 & a_3^T a_2 &   \ddots  &   \ddots  & 0 \\
  \vdots  &  \vdots   &   \ddots  &   \ddots  & \vdots \\
a_n^T a_1 & a_n^T a_2 &   \ldots  & a_n^T a_{n-1} & a_n^T a_{n}
\end{bmatrix}.
\end{equation}

}
\rvs{We illustrate how to derive the update matrix of C-CD by considering a simple case with $n = 3, d_i = 1, \forall i$, and let $a_i = A_i \in \dR^{3 \times 1}$; the case for general $n$ is quite similar and omitted. Denote $x^{k+1}$ to be the iterate at epoch $k+1$, then the update equations at epoch $k$ can be written as}
\rvs{
\begin{equation}\nonumber
\begin{split}
   & a_1^T(a_1 x_1^{k+1} + a_2 x_2^{k} + a_3 x_3^k - b) = 0, \\
   & a_2^T(a_1 x_1^{k+1} + a_2 x_2^{k+1} + a_3 x_3^k - b) = 0, \\
   & a_3^T(a_1 x_1^{k+1} + a_2 x_1^{k+1} + a_3 x_3^{k+1} - b ) = 0.
\end{split}
\end{equation}
The above update equations can be reformulated into
\begin{equation}\label{update matrix of 123}
   \begin{bmatrix}
   a_1^T a_1 &     0     &      0     \\
   a_2^T a_1 & a_2^T a_2 &      0     \\
   a_3^T a_1 & a_3^T a_2 & a_3^T a_3  \\
   \end{bmatrix}
   x^{k+1} =
   \begin{bmatrix}
   0 & -a_1^T a_2 &  -a_1^Ta_3     \\
   0 &     0      &  -a_2^Ta_3     \\
   0 &     0      &      0         \\
   \end{bmatrix}
    x^k + 
    A^T b.
\end{equation}
}
\rvs{
Combining the notation of $\Gamma$ in \cref{eq: Gamma} with $n$ equals to 3, we have  $  \Gamma x^{k+1}  = (  \Gamma - Q  ) x^k + A^T b  $.
This leads to 
$  x^{k+1}  = (I  - \Gamma^{-1} Q ) x^k + \Gamma^{-1}[A^T b].    $
 It is not hard to verify that the optimal solution $x^* = A^{-1}b $
 satisfies $ \Gamma^{-1}[A^T b] = x^* - (I  - \Gamma^{-1} Q )x^*  $,
 thus 
\begin{equation}
    \label{eq: appendix update of GBS-CD}
    x^{k+1} - x^*  = (I  - \Gamma^{-1} Q ) ( x^k - x^*),
\end{equation}
where $I  - \Gamma^{-1} Q$ is the update matrix of C-CD.
} 

\subsection{Two Coordinate Descent Methods}\label{sec: Algorithms}

In this subsection, we formally present sGS-CD and GBS-CD. 
For simplicity, throughout the paper, we only discuss the case that
the block size is $1$.

\subsubsection{Symmetric Gauss-Seidel Order}\label{subsec: DS_alg}
sGS-CD, also called Aitken's double sweep method \cite{floudas2001encyclopedia},
is presented in \cref{Algorithm: n-block sGS-CD}.

In each epoch, sGS-CD performs a forward pass and a backward pass. 
In the forward pass, we update the coordinates in the order $(1, 2, \dots, n)$. \rvs{The forward pass in sGS-CD is the same as one epoch of C-CD}.
In the backward pass, we update the coordinates in the reverse order $(n-1, \dots, 1)$. 
We call an update rule which updates the coordinates in the order of $(1, 2, \dots, n, n-1, \dots, 1)$ as the \textbf{symmetric Gauss-Seidel (sGS) update rule}.

\begin{algorithm}
\caption{\small sGS-CD }
\label{Algorithm: n-block sGS-CD}
\begin{algorithmic}[1]
\FOR{$k=0,1,2,\ldots,$}
\STATE  \textit{Forward Pass}:
\STATE $\quad x_{1}^{k+\frac{1}{2}} \in \argmin_{x_{1}} f\left(x_{1}, x_{2}^{k-1}, x_{3}^{k-1}, \ldots x_{n}^{k-1}\right)$
\STATE $\quad \cdots$
\STATE $\quad x_{n}^{k+\frac{1}{2}} \in \argmin_{x_{n}} f\left(x_{1}^{k+\frac{1}{2}}, x_{2}^{k+\frac{1}{2}}, x_{3}^{k+\frac{1}{2}}, \ldots x_{n}\right)$
\STATE \textit{Backward Pass}:
\STATE $\quad x_{n}^{k+1}  = x_{n}^{k+\frac{1}{2}} $
\STATE $\quad x_{n-1}^{k+1}  \in \argmin_{x_{n-1}} f\left(x_{1}^{k+\frac{1}{2}}, \ldots, x_{n-2}^{k+\frac{1}{2}}, x_{n-1}, x_{n}^{k+1}\right)$
\STATE $\quad \cdots$
\STATE $\quad x_{1}^{k+1} \in \argmin_{x_{1}} f\left(x_{1}, x_{2}^{k+1}, x_{3}^{k+1}, \ldots x_{n}^{k+1}\right)$
\ENDFOR
\end{algorithmic}
\end{algorithm}

\begin{rmk} \label{rmk: same update in backward pass} $ \text{}$  
\rvs{The line 7 in \cref{Algorithm: n-block sGS-CD} is the same as 
$$x_{n}^{k+1}  \in \argmin_{x_{n}} f\left(x_{1}^{k+\frac{1}{2}}, x_{2}^{k+\frac{1}{2}}, \ldots, x_{n-1}^{k+\frac{1}{2}}, x_{n}\right),$$
because $x_{n}^{k+1}$ is optimal given current values of the other coordinates. 
}
\end{rmk}

\rvs{
\cref{rmk: same update in backward pass} implies that the backward pass of sGS-CD updates the coordinates in order $(\mathbf{n},n-1,...,1)$. 
Therefore, the update matrix of the backward pass can be written as $I - \Gamma^{-T}Q$.
The update matrix of sGS-CD is simply the product of update matrices of the forward and backward pass:
\begin{align}\label{eq: sGS-CD update matrix for iterate}
    (I - \Gamma^{-T}Q)(I - \Gamma^{-1}Q).
\end{align}
}

\subsubsection{Gaussian Back Substitution Order}\label{subsec:alg_GBS}


Next, we present GBS-CD for solving the quadratic problem 
\cref{least square optimization formulation for sGS-CD}.
\rvs{GBS-CD consists of two steps in each epoch: a prediction step and a correction step.}
The prediction step of GBS-CD is a regular epoch of C-CD method,
which can be written as 
\begin{align}\label{eq: GBS-CD prediction}
    \tilde{x}^{k} - x^\ast = (I - \Gamma^{-1}Q)(x^k - x^\ast),
\end{align}
\rvs{This expression is the same as \cref{eq: appendix update of GBS-CD} 
besides that we replace the notation $x^{k+1}$ by $\tilde{x}^{k}$ to denote the iterate after the prediction step in epoch $k$.}
Let \rvs{$\Gamma^{-T}$ be the transpose of the inverse of $\Gamma$ defined in \cref{eq: Gamma} and 
$D$ be the diagonal matrix of $Q$.
We denote a matrix $B$ as}
\begin{equation}\label{eq: matrix B in GBS-CD}
 B \triangleq  \begin{bmatrix}
1 &   0 \\
0 & \Gamma^{-T}_{2:n}
\end{bmatrix}
\begin{bmatrix}
1 &   0 \\
0 & D_{2:n}
\end{bmatrix}.
\end{equation}
Here $\Gamma^{-T}_{2:n}$ and $D_{2:n}$ are the sub-matrices obtained by excluding the first row and the first column of \rvs{$\Gamma^{-T}$} and \rvs{$D$} respectively. 

The correction step of GBS-CD at \rvs{the $k$}-th epoch is defined as as
\begin{equation}\label{eq:correction step in GBSCD}
    x^{k+1} = x^k - B(x^k - \tilde{x}^k).
\end{equation}

This step \cref{eq:correction step in GBSCD} corrects the prediction of $\tilde{x}^{k}$ in \cref{eq: GBS-CD prediction}.
\rvs{Since $B$ is an upper triangular matrix, \cref{eq:correction step in GBSCD} can be implemented by back substitution, i.e., update $(x_n, \cdots, x_2)$ in a sequential order.} 
\rvs{Combing the prediction step and the correction step together, the update at the $k$-th epoch can be written as}
\begin{align}\label{eq:GBS update rule}
    x^{k+1} - x^\ast = (I - B \Gamma^{-1}Q) (x^{k} - x^\ast).
\end{align}
This means the update matrix of GBS-CD is
\begin{align}\label{eq:GBS update matrix iterate}
    I - B \Gamma^{-1}Q.
\end{align}
\rvs{We formally define GBS-CD in \cref{Algorithm: n-block GBS-CD}.}


\begin{rmk}
\textbf{Terminology: ``iteration", ``pass'' and ``epoch''}.
Throughout the paper, we use ``iteration'' to denote one step
of updating one coordinate (or one block) 
in an algorithm. 
\rvs{Suppose we have $n$ coordinates, then in C-CD, 
one iteration means one update of one coordinate, and
one epoch means one update of the coordinates in the order of $\{1, \cdots, n\}$}.
\rvs{For symmetrized versions of Gauss-Seidel order, one ``pass'' consists of the update of all the coordinates regardless of the order (one pass can
be a ``forward pass'' or a ``backward pass''),
and one ``epoch'' consists of two passes.
}
\rvs{When solving quadratic problems}, each epoch of GD (or C-CD) takes $\mathcal{O}(n^2)$ number of operations, and each epoch of sGS-CD (or GBS-CD) takes twice the number of operations of GD (or C-CD) \rvs{as it has two passes per epoch}.
Since each epoch of different algorithms takes different numbers of operations, for a fair comparison, we focus on \rvs{comparing} the total complexity of the algorithms, \rvs{i.e. the product of the number of operations per epoch and the total number of epochs to reach some stopping criteria (e.g. the gradient norm is less than certain threshold)}.
\end{rmk}


\begin{algorithm}
\caption{\normalsize GBS-CD }
\label{Algorithm: n-block GBS-CD}
\begin{algorithmic}
\FOR{$k=0,1,2,\ldots,$}
\STATE  \textit{Prediction Step}:
\STATE $\quad \tilde{x}_{1} \in \argmin_{x_{1}} f\left(x_{1}, x_{2}^{k-1}, x_{3}^{k-1}, \ldots x_{n}^{k-1}\right)$
\STATE $\quad \cdots$
\STATE $\quad \tilde{x}_{n} \in \argmin_{x_{n}} f\left(\tilde{x}_{1}, \tilde{x}_{2}, \tilde{x}_{3}, \ldots x_{n}\right)$
\STATE \textit{Correction Step}:
\STATE $\quad x^{k+1} = x^k - B(x^k - \tilde{x}^k)$, where $B$ is defined
in \cref{eq: matrix B in GBS-CD}
\ENDFOR
\end{algorithmic}
\end{algorithm}


                   

\section{Main Results: Upper Bounds and Lower Bounds}\label{sec: results}



We first summarize the main results in \cref{full table of total complexity}. The upper bounds will be given in \cref{prop: upper bound sGS-CD}, \cref{prop: upper bound GBS-CD objective},  and the lower bounds will be given in \cref{thm: sGS-CD lower bound} and \cref{thm: GBS lower bound}. In this table, we ignore the  $\log 1/\epsilon$ factor, which is  necessary for an iterative algorithm to achieve error $\epsilon$. 

\begin{table}[!htbp]
    \centering
    \caption{Complexity of sGS-CD, GBS-CD, C-CD, R-CD, GD for \rvs{quadratic problems in} equal-diagonal cases (ignoring a $\log 1/\epsilon$ factor)} \label{full table of total complexity}
    \renewcommand\arraystretch{1.3}
    \begin{tabular}{|c|c|c|c|c|}
        \hline
        Algorithms  &   $\kappa$ & $\kappa_{\mathrm CD}$    \\
        \hline
        sGS-CD Upper bound (\cref{prop: upper bound sGS-CD}) & $n^3 \kappa $ & $n^4 \kappa_{\mathrm CD} $ \\ 
        sGS-CD Lower bound (\cref{thm: sGS-CD lower bound}) & $\frac{1}{40} n^3 \kappa$  & $\frac{1}{40} n^4 \kappa_{\mathrm CD}$ \\
        \hline
        GBS-CD Upper bound (\cref{prop: upper bound GBS-CD objective}) & $n^3 \kappa $ & $n^4 \kappa_{\mathrm CD} $ \\ 
        GBS-CD Lower bound (\cref{thm: GBS lower bound}) & $\frac{1}{15} n^3 \kappa$  & $\frac{1}{15} n^4 \kappa_{\mathrm CD}$ \\
        \hline
        GD & $n^2 \kappa$ & -- \\
        \hline
        R-CD & -- & $n^2 \kappa_{\mathrm CD}$ \\
        \hline
        C-CD Upper Bound & $n^3 \kappa$ & $n^4 \kappa_{\mathrm CD}$ \\
        C-CD Lower Bound & $\frac{1}{40} n^3 \kappa$ & $\frac{1}{40} n^4 \kappa_{\mathrm CD}$ \\
        \hline
    \end{tabular} 
\end{table}

This table shows that the lower bounds match the upper bounds up to some constant
factor. In addition, the table reveals the relations between the worst-case complexity of sGS-CD, GBS-CD, GD, R-CD and C-CD.

The main implications of our results are the following:
\begin{itemize}
 \item  In the worst case, sGS-CD and GBS-CD are $\mathcal{O}(n)$ times slower than GD, and $\mathcal{O}(n^2)$ times slower than R-CD. 
 \item 
 sGS-CD and GBS-CD are as slow as C-CD up to a constant factor in the worst case.
\end{itemize}


Now we formally state the upper bounds and lower bounds on the convergence rate of sGS-CD and GBS-CD. We let $f^*$ denote the minimum value of a function $f$.

\begin{proposition}\label{prop: upper bound sGS-CD}(Upper bound of sGS-CD)
\rvs{Consider solving the problem \cref{least square optimization formulation for sGS-CD}},
\rvs{and for} any $x^0 \in \dR^n $, let $x^k$ denotes the output of sGS-CD after $k$ epochs, then
\begin{align}\label{eq: SGSCD upper bound prop}
     f(x^{k+1}) - f^* \leq \left(\min \left\{ 1 - \frac{1}{ n \kappa  } \frac{L_{\min}}{L_{\avg}} ,  1 - \frac{L_{\min} }{ L (2 + \log n/ \pi)^2 } \frac{1}{ \kappa  } \right\}\right)^2  (f(x^k) - f^*).
\end{align}


\end{proposition}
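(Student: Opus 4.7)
The plan is to factor one epoch of sGS-CD as a forward C-CD sweep followed by a backward C-CD sweep, and then invoke a single-epoch contraction bound for C-CD twice. Because $f$ is quadratic with $f(x) - f^* = \frac{1}{2}(x-x^*)^T Q (x-x^*)$, the iterate factorization in \cref{eq: sGS-CD update matrix for iterate} immediately translates into a factorization of the function-value contraction: if the forward sweep contracts the suboptimality gap by a factor $\rho_{\text{fw}}$ and the backward sweep by $\rho_{\text{bw}}$, then
\begin{equation*}
f(x^{k+1}) - f^* \;\leq\; \rho_{\text{bw}}\,(f(x^{k+1/2}) - f^*) \;\leq\; \rho_{\text{bw}}\,\rho_{\text{fw}}\,(f(x^k) - f^*),
\end{equation*}
so the claimed bound will follow once I show $\rho_{\text{fw}} = \rho_{\text{bw}} = \rho$, where $\rho$ is the minimum appearing in \cref{eq: SGSCD upper bound prop}.

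I would identify $\rho$ as the single-epoch contraction of C-CD, which is the bound listed in \cref{full table of total complexity} and established earlier in the paper (or borrowed from prior work). The first term $1 - \frac{1}{n\kappa}\frac{L_{\min}}{L_{\avg}}$ is a per-coordinate descent bound in the style of Sun and Ye, while the second $1 - \frac{L_{\min}}{L(2+\log n/\pi)^2\kappa}$ is a sharper spectral estimate on the operator $I - \Gamma^{-1}Q$; taking the minimum lets one use whichever is tighter. For the forward sweep this bound applies verbatim. For the backward sweep I would use \cref{rmk: same update in backward pass} to rewrite it as one epoch of C-CD in which the coordinates are processed in the reversed order $(n, n-1, \dots, 1)$. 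Since the parameters $L_{\min}$, $L_{\avg}$, $L$, and $\kappa$ depend only on $Q$ and its diagonal and not on how the coordinates are indexed, the same $\rho$ governs the reversed-order sweep, with $\Gamma$ simply replaced by its reversed-order analogue $\Gamma^T$ (after relabeling). Composing the two sweeps then yields the $\rho^2$ contraction.

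The main obstacle is therefore not the symmetrization step, which reduces to a clean composition once permutation invariance is verified, but rather the single-sweep C-CD contraction $\rho$ itself; establishing its two minimum terms requires the descent lemma at each coordinate update together with inequalities that relate the intra-epoch cumulative progress to the current suboptimality gap in the $Q$-norm, and the sharper second term in particular calls for a spectral argument on $I - \Gamma^{-1}Q$ that produces the $(2 + \log n/\pi)^2$ factor. Provided this one-pass bound is developed earlier in the paper or cited, the extension to sGS-CD via the composition above is immediate and no extra technical machinery is required.
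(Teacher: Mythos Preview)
Your proposal is correct and matches the paper's proof essentially line for line: the paper also introduces the half-step iterate $x^{k+1/2}$, applies the one-epoch C-CD contraction bound from \cite{sun2016worst} separately to the forward and backward passes, observes that the two contraction constants coincide (because reversing the coordinate order replaces $\Gamma$ by $\Gamma^T$ and $Q$ is symmetric), and multiplies them to obtain the square. One small clarification: both terms in the minimum arise from two different upper bounds on $\|\Gamma^T Q^{-1}\Gamma\|$ (a Frobenius-norm estimate and the triangular-truncation bound $\|\Gamma\|\le(2+\tfrac{1}{\pi}\log n)\|Q\|$), not from a spectral analysis of $I-\Gamma^{-1}Q$ directly.
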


\begin{proposition}
\label{prop: upper bound GBS-CD objective}(Upper bound of GBS-CD)
\rvs{Consider solving the problem \cref{least square optimization formulation for sGS-CD}},
   and for any $x^0 \in \dR^n $, let $x^k$ denotes the output of GBS-CD after $k$ epochs, then
\begin{align}\label{eq: GBS upper bound prop objective}
     f(x^{k+1}) - f^* \leq 
     \left(1 - \frac{1}{\kappa \cdot \min \left\{   \sum_i L_i, (2 + \frac{1}{\pi} \log n)^2  L    \right\}}
     \right)^{2}
     (f(x^k) - f^*)
\end{align}


\end{proposition}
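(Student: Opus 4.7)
The plan is to reduce the stated objective contraction to an iterate contraction in the $Q$-seminorm, and then prove the latter by bounding the $Q$-norm of the one-epoch update operator. Since $f(x)-f^* = \tfrac12\|x-x^*\|_Q^2$ for the quadratic in \cref{least square optimization formulation for sGS-CD}, it suffices to establish
\begin{equation*}
  \|x^{k+1}-x^*\|_Q \;\le\; \Bigl(1-\tfrac{1}{\kappa\,c}\Bigr)\|x^k-x^*\|_Q,
  \qquad c \;:=\; \min\bigl\{\textstyle\sum_i L_i,\;(2+\tfrac{1}{\pi}\log n)^2 L\bigr\},
\end{equation*}
and then square to get \cref{eq: GBS upper bound prop objective}. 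By \cref{eq:GBS update rule} this amounts to bounding $\|I-MQ\|_Q$ with $M := B\Gamma^{-1}$; using the identity $\|(I-MQ)u\|_Q^2 = \|u\|_Q^2 - u^T Q(M+M^T-M^T Q M)Q\,u$, the task reduces to establishing a matrix inequality $M + M^T - M^T Q M \succeq \mu\,Q^{-1}$ with $\mu$ of order $1/(\kappa c)$.

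The key structural observation I would use is that $M$ is a rank-one perturbation of the symmetric PSD sGS preconditioner. Since $\Gamma^{-T}$ is upper triangular, $(\Gamma^{-T})_{i,1}=0$ for every $i\ge 2$; combined with the block form of $B$ in \cref{eq: matrix B in GBS-CD}, this forces $(B\Gamma^{-1})_{ij} = (\Gamma^{-T}D\Gamma^{-1})_{ij}$ for all $i\ge 2$. Hence
\begin{equation*}
  M \;=\; \Gamma^{-T}D\Gamma^{-1} \;+\; e_1\,w^T
\end{equation*}
for an explicit row vector $w$ (the first-row discrepancy), and $\Gamma^{-T}D\Gamma^{-1} = (D^{1/2}\Gamma^{-1})^T(D^{1/2}\Gamma^{-1})$ is symmetric positive semidefinite. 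This lets the argument piggy-back on the spectral estimates already used for sGS-CD, modulo one rank-one correction.

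The two alternatives inside the $\min$ defining $c$ then come from two classical ways of controlling the triangular factor $\Gamma$: a trace / Cauchy--Schwarz estimate produces the $\sum_i L_i = \mathrm{Tr}(Q)$ term, while the Kwapie\'n--Pe{\l}czy\'nski triangular-truncation inequality (roughly $\|\Gamma\|\le(1+\tfrac{1}{\pi}\log n)\|Q\|$) produces the logarithmic $(2+\tfrac{1}{\pi}\log n)^2 L$ term; taking the sharper of the two gives $c$.

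The main obstacle will be absorbing the rank-one correction $e_1 w^T$ without losing the contraction rate. Expanding $M+M^T-M^T Q M$ produces, in addition to the symmetric sGS quantity $2\Gamma^{-T}D\Gamma^{-1} - \Gamma^{-T}D\Gamma^{-1}Q\,\Gamma^{-T}D\Gamma^{-1}$, three perturbative terms $e_1 w^T$, $w\,e_1^T$, and $L_1\,w\,w^T$, and one must show these degrade the PSD lower bound only by a constant factor. I plan to handle this by a direct Cauchy--Schwarz estimate on $w^T Q w$, exploiting that $w$ is determined by a single row of $\Gamma^{-1}$, paralleling the first-coordinate handling in the C-CD worst-case analysis of \cite{sun2016worst}. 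Once the iterate contraction is established, squaring immediately yields \cref{eq: GBS upper bound prop objective}.
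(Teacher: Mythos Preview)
Your route is genuinely different from the paper's, and the part you flag as ``the main obstacle'' is exactly where the proposal is incomplete.

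The paper does not attempt to control $M+M^T-M^TQM$ at all. Instead it proves an eigenvalue identity (\cref{prop: GBS-CD symmetrization}, via \cref{lm: equal eigenvalues}): the GBS update matrix $I-B\Gamma^{-1}Q$ has \emph{the same eigenvalues} as the symmetric matrix $I-\Gamma^{-1}Q\Gamma^{-T}$. This is shown by a short block-triangular argument comparing $B^{-1}Q^{-1}\Gamma$ with $\Gamma^{T}Q^{-1}\Gamma$. Once you have that, the spectral radius is exactly $1-\lambda_{\min}(\Gamma^{-1}Q\Gamma^{-T})=1-1/\|\Gamma^{T}Q^{-1}\Gamma\|$, and the stated constant drops out of the existing estimate $\|\Gamma^{T}Q^{-1}\Gamma\|\le \kappa\cdot\min\{\sum_i L_i,(2+\tfrac1\pi\log n)^2 L\}$ (Claim~B.2 in \cite{sun2016worst}). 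No rank-one perturbation analysis is needed, and the constant in the proposition is obtained on the nose.

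In your plan, two points are problematic. First, the identity you write gives $\|(I-MQ)u\|_Q^2\le(1-\mu)\|u\|_Q^2$, which after passing to $f-f^*=\tfrac12\|\cdot\|_Q^2$ yields a factor $(1-\mu)$, not $(1-\mu)^2$; the ``and then square'' step is not consistent with the inequality you set up. Second, and more seriously, the rank-one absorption is not demonstrated. Your own phrasing (``degrade the PSD lower bound only by a constant factor'') already concedes a possible loss of constants, whereas the proposition has a specific constant. Concretely, expanding with $M=S+e_1w^T$ and $S=\Gamma^{-T}D\Gamma^{-1}$ one finds (using $SQe_1=e_1$) that the cross terms cancel and
\[
M+M^T-M^TQM \;=\; (2S-SQS)\;-\;L_1\,ww^T,
\]
so the entire burden is a lower bound on $2S-SQS-L_1ww^T$ against $Q^{-1}$; you give no argument that $L_1\,w^TQw$ is small enough to preserve the rate $1/\|\Gamma^TQ^{-1}\Gamma\|$. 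Without that, the approach does not establish \cref{eq: GBS upper bound prop objective}. The paper's eigenvalue-identity route sidesteps this difficulty entirely.
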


\begin{theorem}\label{thm: sGS-CD lower bound}(Lower bound of sGS-CD)
	For any initial point $x^0 \in \dR^n $, any $\delta \in (0,1]$, there exists a quadratic function $f(x) = x^TA x - 2b^T x $ such that 
		\begin{align}
		f(x^k) - f^* \geq (1 - \delta) \left( 1 - \frac{ 4\pi^2 }{ n \rvs{ \kappa(A)} }  \right)^{2k + 2 } ( f(x^0) - f^*) , \;  \forall k,
		\end{align}
	where  $x^k$ denotes the output of sGS-CD after $k$ epochs.
 \end{theorem}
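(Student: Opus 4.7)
The plan is to exhibit a specific PSD matrix $A$ with equal diagonals on which the sGS-CD iteration matrix $M_{\mathrm{sGS}} = (I-\Gamma^{-T}A)(I-\Gamma^{-1}A)$ has a real eigenvalue at least $1 - 4\pi^{2}/(n\kappa(A))$. Once such an $A$ is in hand, the desired lower bound on $f(x^{k})-f^{*}$ follows by choosing $b$ (hence $x^{*}$) so that $x^{0}-x^{*}$ aligns with the corresponding eigenvector. Since $A$ is PSD with positive diagonal, $M_{\mathrm{sGS}}$ is similar to a PSD matrix via conjugation by $D^{1/2}$ where $D=\diag(A)$, so all its eigenvalues are real and nonnegative, and lower-bounding $\rho(M_{\mathrm{sGS}})$ reduces to producing a single good Rayleigh-type test vector.

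For the hard instance, I would mirror the C-CD lower-bound construction of \cite{sun2016worst}: take $A$ to be a normalized matrix (for example a carefully perturbed rank-one-plus-identity or Toeplitz-type matrix) whose lower-triangular solve $\Gamma^{-1}$ acts like discrete integration on the lowest Fourier modes, so that $(I-\Gamma^{-1}A)v \approx (1-\varepsilon) v$ for the smoothest mode $v$ with $\varepsilon = \Theta(1/(n\kappa))$. I would additionally impose a palindromic symmetry $A_{ij}=A_{n+1-i,n+1-j}$ so that the backward-sweep matrix $I-\Gamma^{-T}A$ is conjugate to the forward sweep under the reversal permutation $P$; this guarantees that low-frequency modes are approximately preserved by both sweeps.

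Next, I would lower-bound $\rho(M_{\mathrm{sGS}})$ by evaluating the product on a symmetrized test vector $w = v + P v$. Using the palindromic structure, a short computation should yield $M_{\mathrm{sGS}} w \approx (1-\varepsilon)^{2} w$, and hence $\rho(M_{\mathrm{sGS}}) \ge (1-\varepsilon)^{2} \ge 1 - 4\pi^{2}/(n\kappa)$ once the constants in $\varepsilon$ are tuned. To translate this into the stated bound, given an arbitrary $x^{0}$ let $v^{\star}$ be a top eigenvector of $M_{\mathrm{sGS}}$ with eigenvalue $\lambda$, pick a scalar $c\neq 0$, set $x^{*} = x^{0} - c\, v^{\star}$ and $b = A x^{*}$; then $x^{k}-x^{*} = \lambda^{k} c\, v^{\star}$ gives $f(x^{k})-f^{*} = \lambda^{2k}(f(x^{0})-f^{*})$ exactly. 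The slack $(1-\delta)$ in the theorem absorbs the case where we must slightly perturb $A$, or tilt $v^{\star}$ off the direction $x^{0}-x^{*}$, to keep the eigenvector structure intact while matching an arbitrary $x^{0}$; a direct perturbation bound on the Rayleigh quotient converts such a mismatch into the multiplicative factor $(1-\delta)$.

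The main obstacle will be the spectral computation in the second step. The product $\tilde{M} M$ of two matrices each with spectral radius near $1$ can have spectral radius strictly smaller than the product of the individual spectral radii, because the two factors do not simultaneously diagonalize in general. Overcoming this requires a symmetrized construction in which one mode is approximately fixed by \emph{both} the forward and backward sweeps; verifying survival of this mode through the two triangular solves, and tracking the constants so that the final rate is the claimed $1-4\pi^{2}/(n\kappa)$ rather than a loose $1-\Theta(1/(n^{2}\kappa))$, is the technical heart of the proof.
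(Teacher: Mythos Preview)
Your overall plan—pick a hard instance, lower-bound the top eigenvalue of $M_{\mathrm{sGS}}$, then align $x^{0}-x^{*}$ with the top eigenvector by choosing $b$—is sound, and the rank-one-plus-identity matrix $A=(1-c)I+cee^{T}$ you gesture at is exactly the instance the paper uses. But the route you propose for lower-bounding $\rho(M_{\mathrm{sGS}})$ is much harder than necessary, and contains a technical error.

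First, the error: conjugation by $D^{1/2}$ does \emph{not} symmetrize $M_{\mathrm{sGS}}=(I-\Gamma^{-T}A)(I-\Gamma^{-1}A)$. Already for $n=2$ with $A=\bigl(\begin{smallmatrix}1&c\\c&1\end{smallmatrix}\bigr)$ one gets $M_{\mathrm{sGS}}=\bigl(\begin{smallmatrix}0&-c^{3}\\0&c^{2}\end{smallmatrix}\bigr)$, and since $D=I$ here the conjugation is trivial. The correct conjugation is by $A^{1/2}$: setting $J=I-A^{1/2}\Gamma^{-1}A^{1/2}$, one has
\[
A^{1/2}M_{\mathrm{sGS}}A^{-1/2}=(I-A^{1/2}\Gamma^{-T}A^{1/2})(I-A^{1/2}\Gamma^{-1}A^{1/2})=J^{T}J,
\]
because $(A^{1/2}\Gamma^{-T}A^{1/2})^{T}=A^{1/2}\Gamma^{-1}A^{1/2}$. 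Hence $\rho(M_{\mathrm{sGS}})=\rho(J^{T}J)=\|J\|^{2}$.

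This single identity dissolves the obstacle you flag in your last paragraph. You worry that the spectral radius of a product of two sweeps can be much smaller than the product of the individual spectral radii; but here the two sweeps combine into $J^{T}J$, so the spectral radius of the product is the \emph{spectral norm} of a single sweep squared—and spectral norm dominates spectral radius. Moreover $J$ is precisely the residual update matrix of C-CD (i.e.\ $r^{k+1}=Jr^{k}$ there), so the C-CD lower bound of \cite{sun2016worst} immediately gives $\|J\|\ge(1-\delta)\bigl(1-2\pi^{2}/(n\kappa)\bigr)$, whence $\rho(M_{\mathrm{sGS}})\ge(1-\delta)^{2}\bigl(1-2\pi^{2}/(n\kappa)\bigr)^{2}\ge(1-2\delta)\bigl(1-4\pi^{2}/(n\kappa)\bigr)$. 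No palindromic construction, no approximate eigenvector, no tracking of a mode through two triangular solves is needed.

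Your test-vector route might be pushed through—the all-$c$ matrix is even permutation-invariant, so the palindromic symmetry is free—but the ``short computation'' you allude to is exactly the delicate part, and you would be re-deriving from scratch a bound on $\|J\|$ that the existing C-CD analysis already supplies. The paper's symmetrization reduces the sGS lower bound to a two-line corollary of the C-CD result.
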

 
\begin{theorem}\label{thm: GBS lower bound}(Lower bound of GBS-CD)
    For any initial point $x^0 \in \dR^n $, 
    for any $\delta \in (0,1]$,
    there exists a quadratic function $f(x) = x^TA x - 2b^T x $ such that
    \begin{equation}
        f(x^k) - f^* \geq (1 - \delta) \left( 1 - \frac{ 3\pi^2}{ (12-\pi^2) c n \rvs{\kappa(A)} }  \right)^{2k + 2 } ( f(x^0) - f^*) , \;  \forall k.
	\end{equation}
	where \rvs{$x^k$ denotes the output of GBS-CD after $k$ epochs} and $c \in (0, 1)$ is a constant defined for the quadratic function.
\end{theorem}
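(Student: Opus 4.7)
The plan is to exhibit, given any initial point $x^0$ and any tolerance $\delta\in(0,1]$, a specific SPD matrix $A$ and right-hand side $b$ such that GBS-CD applied to $f(x) = x^T A x - 2 b^T x$ contracts no faster than the stated rate. The exact error recursion $x^{k+1}-x^* = M(x^k-x^*)$ with $M \triangleq I - B\Gamma^{-1}A$ from \cref{eq:GBS update rule} reduces the task to producing $A, b$ for which $M$ has an (approximate) eigenvector aligned with $x^0-x^*$ whose eigenvalue $\mu$ satisfies $|\mu| \geq 1 - \frac{3\pi^2}{(12-\pi^2)c\,n\kappa(A)}$, and then reading off the objective-value bound via $f(x^k)-f^* = (x^k-x^*)^T A (x^k-x^*)$.

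First, I would pick a structured $A$ with equal diagonal entries (so that $L_{\min} = L_{\max} = L_{\avg}$ and $\kappa(A)$ alone controls the problem), modeled on the family used to prove \cref{thm: sGS-CD lower bound}. A natural candidate is a matrix whose lower triangular part $\Gamma$ has a clean closed form, so that $\Gamma^{-1}A$ and the bordered operator $B$ from \cref{eq: matrix B in GBS-CD} can both be analyzed in a trigonometric basis. In such constructions the sum $\sum_{j\geq 1} 1/j^2 = \pi^2/6$ naturally surfaces, which is consistent with the explicit constants $\pi^2$ and $12-\pi^2$ appearing in the target bound.

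Second, with $A$ fixed, I would produce an explicit candidate slow mode $v$ for $M$ (typically a sinusoid $v_j = \sin(j\theta)$ for a carefully chosen $\theta$) and estimate a Rayleigh-type quantity to get $|\mu| \geq 1 - \frac{3\pi^2}{(12-\pi^2)c\,n\kappa}$, where $c\in(0,1)$ measures the fraction of $\|v\|_A^2$ that lies in the invariant subspace on which $B\Gamma^{-1}$ genuinely behaves like the symmetric Gauss--Seidel operator $\Gamma^{-T}\Gamma^{-1}$. Because the theorem allows $A$ to depend on $x^0$, I would translate and scale the construction so that $x^0 - x^*$ is exactly a multiple of $v$ (equivalently, set $x^* = x^0 - v$ and $b = A x^*$). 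Then $x^k - x^* = \mu^k v$, giving $f(x^k)-f^* = \mu^{2k}(f(x^0)-f^*)$, and the $(1-\delta)$ factor in the conclusion absorbs the approximation error that arises if $v$ is only an approximate eigenvector; a short perturbation argument based on the spectral decomposition of $M$ upgrades the approximate identity to the stated lower bound for all $k$.

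The main obstacle is the spectral step. The operator $M = I - B\Gamma^{-1}A$ is non-symmetric and, crucially, differs from the sGS-CD update matrix $(I-\Gamma^{-T}A)(I-\Gamma^{-1}A)$ in \cref{eq: sGS-CD update matrix for iterate} precisely by the bordering that leaves the first row and column of $\Gamma^{-T}$ and $D$ replaced by an identity block inside $B$. Controlling this border --- showing that it only causes a mild constant-factor loss captured by $c$ rather than degrading the $1/(n\kappa)$ scaling --- is what forces the extra factor $(12-\pi^2)c$ in the denominator and is the principal technical hurdle. I expect the cleanest route is to restrict the analysis to the subspace $A$-orthogonal to $e_1$, on which $B$ coincides with $\Gamma^{-T}$ acting on the last $n-1$ coordinates, and then recycle the sinusoidal test-vector computation developed for \cref{thm: sGS-CD lower bound} on that subspace, with $c$ quantifying the energy of the chosen slow mode lying inside it.
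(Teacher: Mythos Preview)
Your proposal has two concrete gaps that prevent it from going through.

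First, you misidentify the meaning of the constant $c$. In the paper's construction, $c$ is not an energy fraction of a test vector: it is the off-diagonal entry of the explicit worst-case matrix $Q$ in \cref{eq:Ac def}, so that $Q_{ii}=1$ and $Q_{ij}=c$ for $i\neq j$, with condition number $\kappa=(1-c+cn)/(1-c)$. The factor $(12-\pi^2)$ likewise does not emerge from $\sum_{j\geq 1}1/j^2=\pi^2/6$; it arises as $3\pi^2\bigl(\tfrac{4}{\pi^2}-\tfrac{1}{3}\bigr)$ from a quantitative computation of $\|\Gamma\Gamma^T-Q\|\gtrapprox 4c^2n^2/\pi^2$ (via the $\min(i,j)$ matrix in \cref{prop:suvrit}) combined with $\|\Gamma^T J\Gamma\|=n+cn(n-1)+\tfrac{c^2}{6}n(n-1)(2n-1)$. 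Your sinusoidal-test-vector plan does not produce these constants, and your interpretation of $c$ would leave the statement unprovable as written.

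Second, and more seriously, you are missing the key structural step that makes the non-symmetric update matrix tractable. The paper does not attempt a Rayleigh-type bound on $M=I-B\Gamma^{-1}Q$ directly; instead, \cref{prop: GBS-CD symmetrization} (proved via \cref{lm: equal eigenvalues}) shows that $\eig(I-B\Gamma^{-1}Q)=\eig(I-\Gamma^{-1}Q\Gamma^{-T})$, a \emph{symmetric} matrix. This gives exactly $\rho(M)=1-1/\|\Gamma^T Q^{-1}\Gamma\|$, and the whole proof reduces to lower-bounding a single spectral norm, which is done by writing $Q^{-1}=\tfrac{1}{1-c}I-\tfrac{c}{(1-c)(1-c+cn)}J$ (Sherman--Morrison) and applying reverse triangle inequalities. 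Your proposed workaround---restricting to the subspace $A$-orthogonal to $e_1$ on which ``$B$ coincides with $\Gamma^{-T}$''---does not work: $B$ and $\Gamma^{-T}$ differ along the entire first row, not the first column, so even vectors with $v_1=0$ are mapped differently, and in any case no such subspace is invariant under $M$. Without the symmetrization of \cref{prop: GBS-CD symmetrization}, a Rayleigh-quotient argument on the non-normal $M$ gives no control over its spectral radius.
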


The proofs of \cref{thm: sGS-CD lower bound} and \cref{thm: GBS lower bound} will be given in \cref{subsec: proof sGS lower bound} and \cref{subsec: proof GBS lower bound} respectively.
\rvs{We defer the formal proofs of \cref{prop: upper bound sGS-CD} and \cref{prop: upper bound GBS-CD objective} to \cref{subsec: prop: upper bound sGS-CD} and \cref{subsec: prop: upper bound GBS-CD objective}, since they are relatively easy (based on earlier result of \cite{sun2016worst}).}

Now we discuss how to obtain \cref{full table of total complexity} from the above results. We say an algorithm has complexity $\tilde{\mathcal{O}}( g(n, \theta) )$,
if it takes $\mathcal{O}( g(n, \theta)  \log(1/\epsilon) )$ unit operations 
to achieve relative error $\epsilon$.
As we discussed before, each epoch of GD, C-CD and R-CD will take $\mathcal{O}(n^2)$ operations, and each epoch of sGS-CD and GBS-CD will take twice the number of operations of GD. 
Using the fact $ -\ln(1 - z) \geq - z, z \in (0,1) $, one can immediately show that to achieve
$ (1 - 1/u )^k \leq \epsilon $, one only needs $ k \geq u \log(1/\epsilon) $ epoch. 
Thus we can transform the convergence rate to the number of epoch, then the total time complexity. 

Consider the equal-diagonal case for now, then $\frac{L}{ L_{\min} } = \frac{\lambda_{\max}}{\lambda_{\avg}}$ and $\kappa_{\mathrm{CD}} = \frac{L_{\avg}}{\lambda_{\min}} = \frac{\lambda_{\avg}}{\lambda_{\min}}$
and the upper bound of sGS-CD on the convergence rate \cref{eq: SGSCD upper bound prop} can be transformed to the following upper bound of complexity
\begin{equation}
 \tilde{\mathcal{O}}\left( n^3 \kappa \right) \quad \textrm{or } \quad \tilde{\mathcal{O}}\left(n^4 \kappa_{\mathrm{CD}} \right).
\end{equation}
These two quantities are those entries of sGS-CD upper bound in \cref{full table of total complexity}. Similarly, the other bounds on the convergence rates in \cref{thm: sGS-CD lower bound}, \cref{prop: upper bound GBS-CD objective} and \cref{thm: GBS lower bound}  can be transformed to corresponding bounds on the complexity, and they form the rest of \cref{full table of total complexity}.

\section{Symmetrized Update Matrix}\label{sec: proof techniques}


\rvs{ As iterative algorithms can often be written as matrix recursions, we would like to analyze the update matrices of the matrix recursions to study the convergence of the algorithms. In particular, we want to analyze the spectral radius of the update matrix $M$,
which is $\rho(M)$, and compute the lower bound and upper bound of $\rho(M)$. 

\textbf{Difference of upper bound and lower bound.}
For analyzing the upper bounds of convergence rate, researchers often
relax the spectral radius to the spectral norm, i.e., 
use the bound $ \rho(M) \leq \| M\|, \forall M $,
and then obtain an upper bound of $\| M\|$.
In fact, if $ \|  M \| \leq r $, then the matrix recursion
$ x_{k+1} = M x_k $ satisfies  $ \| x_k\| \leq r^k \| x_0\|$. 
 The benefit of analyzing spectral norm is that it has many nice properties, e.g., $ \| M_1 + M_2 \| \leq \| M_1\| + \|  M_2 \| $
 and $ \| M_1 \| \| M_2 \| \leq \| M_1\| \|  M_2 \| .$
 These properties will often make the analysis 
 much easier. 
 
However, to obtain the lower bound of the convergence rates, 
analyzing the spectral norm is not enough in general.
Even if we can prove $ \|  M \| \geq r $, we cannot claim the matrix recursion $ x_{k+1} = M x_k $ satisfies  $ \| x_k\| \geq r^k \| x_0\|$.
It seems that we have to analyze the spectral
radius directly, i.e., proving $\rho(M) \geq r$. 
This is one challenge for analyzing algorithms
with non-symmetric update matrices such as C-CD (e.g.,
\cite{sun2016worst} computes $\rho(M)$ for C-CD).

\textbf{Idea: symmetrization.}
Nevertheless,  sGS and GBS are motivated by 
symmetrizing the Gauss-Seidel rule,
thus we were wondering whether this design principle
could simplify the update matrix in some way.
An important (though simple) observation is that
for any symmetric matrix $M$, $\rho(M) = \| M \|$. 
Thus if we can get a symmetric update matrix,
the analysis of the spectral radius can be reduced
to the analysis of the spectral norm, which shall be more tractable. 
This motivates the first main idea: \textit{we shall transform
the update matrices of sGS and GBS to symmetric matrices}. 
 }
\rvs{

Since we want to analyze the spectral radius, 
then we need to keep the eigenvalues during the ``transformation''.  
Therefore, we shall apply similarity transformation. 
We wish to obtain a matrix $M'$ such that
\begin{itemize}
    \item (R1) $M'$ is symmetric;
    \item (R2) $M' \sim M$, where $M$ is the update matrix
    of sGS or GBS. 
\end{itemize}
Here $\sim$ is the notation of a similar transformation: $M_1 \sim M_2$ means $M_1$ is similar to $M_2$.
The major questions of this section are:
\begin{equation}
\begin{split}
  &  \text{Q1: Does there exist } M'  \text{ s.t. }   M'   \sim M,
    \text{for } M  = M_{\text{sGS-CD}} \text{ and }M_{\text{GBS-CD}}? \\
   &   \text{Q2: If yes, what are they?} 
    \end{split}
\end{equation}
It was not clear whether such a matrix $M$ exits or not.
The original update matrices of them \cref{eq: sGS-CD update matrix for iterate} and \cref{eq:GBS update matrix iterate}
are not symmetric. 
Our intuition is that both sGS and GBS perform
``symmetrization'' which may lead to symmetric update matrices,
but this intuition does not directly lead to a proof.

Fortunately, the answer to Q1 is yes. 
We will present the transformed matrices in the next 
two propositions. 
}

\rvs{
}

\rvs{
\textbf{Two propositions on symmetrized update matrices.}
\begin{proposition}\label{prop: sGS-CD symmetrization}
(Transformation of sGS-CD update matrix)
Consider the matrix $M_{\text{sGS-CD}}$ defined in \cref{eq: sGS-CD update matrix for iterate}, which is the update matrix of sGS-CD. We have
\begin{equation}
    \eig \left( M_{\text{sGS-CD}} \right) =
\eig \left( (I -  A \Gamma^{-T} A^T  ) ( I -  A \Gamma^{-1} A^T ) \right).
\end{equation}
\end{proposition}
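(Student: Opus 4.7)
The plan is to prove the two matrices are actually similar (not merely share eigenvalues), with the similarity transform given by $A$ itself. Since similar matrices have identical characteristic polynomials, this immediately yields $\eig(M_{\text{sGS-CD}}) = \eig\bigl((I - A\Gamma^{-T}A^{T})(I - A\Gamma^{-1}A^{T})\bigr)$.

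First, I would rewrite $M_{\text{sGS-CD}}$ by substituting $Q = A^{T}A$, so that
\[
M_{\text{sGS-CD}} \;=\; \bigl(I - \Gamma^{-T}A^{T}A\bigr)\bigl(I - \Gamma^{-1}A^{T}A\bigr).
\]
The key observation is that the $A^{T}A$ factor inside each bracket is naturally conjugated into $AA^{T}$-style objects by a similarity of the form $P \mapsto A\,P\,A^{-1}$, \emph{provided} $A$ is invertible. Under that assumption I would compute
\[
A\,\bigl(I - \Gamma^{-T}A^{T}A\bigr)\,A^{-1}
\;=\; I - A\Gamma^{-T}A^{T}\,(AA^{-1})
\;=\; I - A\Gamma^{-T}A^{T},
\]
and analogously $A(I - \Gamma^{-1}A^{T}A)A^{-1} = I - A\Gamma^{-1}A^{T}$. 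Since similarity respects products, multiplying these two identities gives
\[
A\,M_{\text{sGS-CD}}\,A^{-1}
\;=\; \bigl(I - A\Gamma^{-T}A^{T}\bigr)\bigl(I - A\Gamma^{-1}A^{T}\bigr),
\]
which is exactly the targeted matrix. Hence the two matrices are similar when $A$ is invertible, and their eigenvalue multisets coincide.

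The main obstacle I anticipate is the case in which $A$ is singular, where the similarity transform by $A$ is not available. I would resolve this by a density/continuity argument: the entries of $\Gamma$ are continuous functions of the entries of $A$ (recall $\Gamma$ is the lower triangular part of $A^{T}A$, and the hypothesis $Q_{ii}\neq 0$ only excludes a codimension-one set near any fixed $A$), so both sides of the claimed identity are continuous in $A$ as multisets of eigenvalues. Perturbing $A \leadsto A + \epsilon I$ makes $A$ invertible for all but finitely many $\epsilon$, the invertible case applies, and taking $\epsilon \to 0$ preserves the equality of characteristic polynomials. Alternatively, one can bypass continuity entirely by observing that $XY$ and $YX$ share the same characteristic polynomial for square matrices of equal size and applying this identity twice—once to $X = \Gamma^{-T}A^{T}$, $Y = A$ inside the first factor and similarly for the second—though packaging this cleanly as a statement about the product of two such factors is exactly where the similarity-by-$A$ viewpoint is most transparent. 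I would therefore present the invertible case as the main computation and append a brief continuity remark for the general case.
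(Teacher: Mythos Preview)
Your proposal is correct and follows essentially the same route as the paper: substitute $Q=A^{T}A$ and conjugate $M_{\text{sGS-CD}}$ by $A$ to obtain $(I-A\Gamma^{-T}A^{T})(I-A\Gamma^{-1}A^{T})$, whence equality of eigenvalues. The paper simply assumes $A$ is invertible (consistent with its earlier use of $x^{*}=A^{-1}b$) and does not discuss the singular case; your continuity remark is a harmless addition but not needed in the paper's setting.
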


\begin{proposition}\label{prop: GBS-CD symmetrization}
(Transformation of GBS-CD update matrix)
Consider the matrix $M_{\text{GBS-CD}}$ defined in \cref{eq:GBS update matrix iterate}, which is the update matrix of GBS-CD. We have
\begin{equation}
    \eig \left( M_{\text{GBS-CD}}  \right) =
\eig \left( I - \Gamma ^{-1} Q \Gamma^{-T} \right).
\end{equation}
\end{proposition}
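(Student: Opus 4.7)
The plan is to combine a block decomposition of $Q$ (and hence of $\Gamma$) with respect to the first coordinate with the cyclic eigenvalue identity $\eig(XY)=\eig(YX)$ for same-size square $X,Y$. The reason this is natural is that $B$ was constructed to leave the first coordinate untouched in the correction step, so one expects $M_{\text{GBS-CD}}$ to be block upper triangular with a trivial top block, while the interesting $(n-1)\times(n-1)$ bottom block should agree in spectrum --- after cyclic commutation --- with the corresponding block of the symmetric matrix $\Gamma^{-1}Q\Gamma^{-T}$.

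Concretely, I would write $Q=\begin{bmatrix} L_1 & r^T \\ r & Q_{22}\end{bmatrix}$ and $\Gamma=\begin{bmatrix} L_1 & 0 \\ r & \Gamma_{22}\end{bmatrix}$, where $\Gamma_{22}$ is the lower-triangular part of $Q_{22}$. Using the block-inverse formula for $\Gamma$, a direct calculation yields $\Gamma^{-1}Q=\begin{bmatrix} 1 & r^T/L_1 \\ 0 & \Gamma_{22}^{-1}S\end{bmatrix}$, with $S:=Q_{22}-rr^T/L_1$ the Schur complement of $L_1$ in $Q$. Since $B=\mathrm{diag}(1,\Gamma_{22}^{-T}D_{22})$ is block diagonal, $B\Gamma^{-1}Q$ remains block upper triangular with diagonal blocks $1$ and $\Gamma_{22}^{-T}D_{22}\Gamma_{22}^{-1}S$. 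A parallel block calculation for $\Gamma^{-1}Q\Gamma^{-T}$ exposes the key cancellation $1\cdot(-r^T\Gamma_{22}^{-T}/L_1)+(r^T/L_1)\cdot\Gamma_{22}^{-T}=0$ in the off-diagonal block, which forces $\Gamma^{-1}Q\Gamma^{-T}$ to be block \emph{diagonal} with blocks $1/L_1$ and $\Gamma_{22}^{-1}S\Gamma_{22}^{-T}$. Under the equal-diagonal normalization used in the paper's lower-bound constructions (so $L_1=1$ and $D_{22}=I$), the two scalar blocks match, and the cyclic identity $\eig(\Gamma_{22}^{-T}\cdot\Gamma_{22}^{-1}S)=\eig(\Gamma_{22}^{-1}S\cdot\Gamma_{22}^{-T})$ then identifies the remaining $n-1$ eigenvalues.

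The step I expect to be the main obstacle is verifying the off-diagonal cancellation in $\Gamma^{-1}Q\Gamma^{-T}$: this is the algebraic manifestation of the symmetrization produced by sandwiching $Q$ between $\Gamma^{-1}$ and $\Gamma^{-T}$, and it is what lets one cleanly separate the first-coordinate eigenvalue from the rest of the spectrum on both sides. Once this structural observation is in hand, the remaining identification is an immediate application of the cyclic eigenvalue identity.
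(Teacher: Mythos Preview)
Your proposal is correct and follows essentially the same idea as the paper's proof: a $1\times 1$ versus $(n-1)\times(n-1)$ block decomposition with respect to the first coordinate, combined with the equal-diagonal assumption $L_1=1$, $D_{22}=I$ (which the paper also needs via Lemma~4.1), reducing the statement to a comparison of the $(2{:}n)$ diagonal blocks.

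The packaging, however, is somewhat different. The paper first uses the similarity $B\Gamma^{-1}Q\sim\Gamma^{-1}QB$, then passes to inverses and shows that $B^{-1}Q^{-1}\Gamma$ and $\Gamma^{T}Q^{-1}\Gamma$ are both block upper triangular with identical diagonal blocks $1$ and $\Gamma_{2:n}^{T}\mathbf{b}_{22}$. You instead work directly with $B\Gamma^{-1}Q$ and $\Gamma^{-1}Q\Gamma^{-T}$, identify the Schur complement $S=Q_{22}-rr^T/L_1$ explicitly, and observe the cancellation that makes $\Gamma^{-1}Q\Gamma^{-T}$ genuinely block \emph{diagonal} (not merely block upper triangular); the match on the $(2{:}n)$ blocks then follows from the cyclic identity $\eig(\Gamma_{22}^{-T}\Gamma_{22}^{-1}S)=\eig(\Gamma_{22}^{-1}S\Gamma_{22}^{-T})$. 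Your route is slightly more direct --- it avoids the intermediate similarity step and the passage to inverses --- and the block-diagonal observation for $\Gamma^{-1}Q\Gamma^{-T}$ is a clean structural fact worth recording in its own right. Both proofs make the same essential use of the equal-diagonal normalization, without which the scalar blocks ($0$ versus $1-1/L_1$) and the $(n-1)\times(n-1)$ blocks ($\Gamma_{22}^{-T}D_{22}\Gamma_{22}^{-1}S$ versus $\Gamma_{22}^{-1}S\Gamma_{22}^{-T}$) would not match.
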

}

\rvs{
\cref{prop: sGS-CD symmetrization} shows that the update matrix of sGS-CD 
has the same eigenvalues as the matrix $M'_{\text{sGS-CD}} = (I -  A \Gamma^{-T} A^T  ) ( I -  A \Gamma^{-1} A^T )$. 
We denote $J$ as $I -  A \Gamma^{-T} A^T$ and observe that $M'_{\text{sGS-CD}} $ has two nice properties:
\begin{itemize}
    \item $M'_{\text{sGS-CD}} $ is a product of $J$ and $J^T$;
    \item $M'_{\text{sGS-CD}}$ itself is symmetric.
\end{itemize}

These properties implies the following important relation
\begin{align}\label{eq: sGS simpify proof}
    \rho(M_{\text{sGS-CD}}) = \rho(M'_{\text{sGS-CD}}) = \|M'_{\text{sGS-CD}}\| = \|J J^T \| = \|J\|^2,
\end{align}
which would simplify the analysis of sGS-CD, 
as we can utilize the analysis of $J$ (note that $J$ is the update matrix
of C-CD analyzed in \cite{sun2016worst}). 

Similarly, \cref{prop: GBS-CD symmetrization} shows that the update matrix of GBS-CD denoted as $M_{\text{GBS-CD}} $ has the same eigenvalues as a symmetric matrix $M'_{\text{GBS-CD}} = I - \Gamma ^{-1} Q \Gamma^{-T}$. This implies that 
\begin{align}\label{eq: GBS simpify proof}
    \rho(M_{\text{GBS-CD}}) = \rho(M'_{\text{GBS-CD}}) = \|M'_{\text{GBS-CD}}\|.
\end{align}
The analysis of $\|M'_{\text{GBS-CD}}\|$ is rather nontrivial;
see the formal proof later. 
}

\rvs{
In \cref{subsec: proof sGS lower bound} and \cref{subsec: proof GBS lower bound}, we will show how to use \cref{prop: sGS-CD symmetrization} and \cref{prop: GBS-CD symmetrization} to derive the lower bounds of the convergence rates of sGS-CD and GBS-CD. In the next subsection, we will provide the proofs of the 
above propositions.
}


\subsection{Proof of the Propositions}
\rvs{

$\text{ }$

\textbf{Proof of \cref{prop: sGS-CD symmetrization}}:
By the definition of $Q$ when formulating the problem \cref{least square optimization formulation for sGS-CD}, we can decompose $Q$ as $A^TA$ and get
 \begin{align}
 \label{sgs cd update}
 M_{\text{sGS-CD}} =  (I - \Gamma^{-T} Q ) (I - \Gamma^{-1} Q) = (I -  \Gamma^{-T} A^T A ) (I - \Gamma^{-1} A^T A ).
 \end{align}

Since $M_{\text{sGS-CD}}$ is not a symmetric matrix, then we want to apply similarity transformation to $M_{\text{sGS-CD}}$ and get a symmetric matrix.
 \begin{equation}\label{sGS symmetric form in primal form s5}
 \begin{split}
  M'_{\text{sGS-CD}} & =  A M_{\text{sGS-CD}} A^{-1}   \\
  &  = (A -  A \Gamma^{-T} A^T A ) (A^{-1} - \Gamma^{-1} A^T )  \\
  & = (I -  A \Gamma^{-T} A^T  ) A A^{-1} ( I -  A \Gamma^{-1} A^T )   \\
  & = (I -  A \Gamma^{-T} A^T  ) ( I -  A \Gamma^{-1} A^T ).  
 \end{split}
 \end{equation}
 
As $M'_{\text{sGS-CD}}$ is similar to $M_{\text{sGS-CD}}$, they have the same eigenvalues.  $\Box$


\vspace{0.3cm}

\textbf{Proof of \cref{prop: GBS-CD symmetrization}}:
We have
\begin{equation}\label{eq: GBS similar relations 2}
    B\Gamma^{-1} Q  \sim \Gamma^{-1} Q B.
\end{equation}

\begin{lemma}\label{lm: equal eigenvalues}
\rvs{Consider an equal-diagonal matrix $Q$ with $Q_{ii} = 1$ and the corresponding lower triangular matrix $\Gamma$ and matrix $B$ as defined in \cref{eq: matrix B in GBS-CD}.} 
\rvs{Then the eigenvalues of $B^{-1}Q^{-1} \Gamma$ and $\Gamma^T Q^{-1} \Gamma$ are the same.}
\end{lemma}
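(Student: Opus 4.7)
The plan is to exhibit both matrices as block upper triangular under a $1+(n-1)$ partition with identical diagonal blocks, so that their eigenvalue multisets automatically coincide. The crucial observation is that the first column of $\Gamma$ equals the first column of $Q$: since $\Gamma$ is the lower triangular part of $Q$ and $Q_{11}=1$, both columns are $(1, Q_{21}, \ldots, Q_{n1})^T$. Equivalently, $Q^{-1}\Gamma\, e_1 = e_1$, which will force the $(2,1)$ block of $Q^{-1}\Gamma$ to vanish. Meanwhile $B$ is block diagonal and $\Gamma^T$ is block upper triangular with respect to the same partition, so left-multiplication by either matrix preserves block triangularity.

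First I would introduce the partition
\begin{equation*}
Q = \begin{pmatrix} 1 & q^T \\ q & Q_{2:n}\end{pmatrix},\qquad
\Gamma = \begin{pmatrix} 1 & 0\\ q & \Gamma_{2:n}\end{pmatrix},\qquad
B^{-1} = \begin{pmatrix} 1 & 0\\ 0 & \Gamma_{2:n}^{T}\end{pmatrix},\qquad
\Gamma^T = \begin{pmatrix} 1 & q^T\\ 0 & \Gamma_{2:n}^T\end{pmatrix},
\end{equation*}
with $q = (Q_{21},\ldots,Q_{n1})^T$. Applying the block inverse formula to $Q$ with Schur complement $S = Q_{2:n} - q q^T$ and multiplying on the right by $\Gamma$, the first-column agreement between $Q$ and $\Gamma$ forces the $(2,1)$ block of $Q^{-1}\Gamma$ to vanish, and the $(1,1)$ block to simplify to $1$. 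A short computation yields
\begin{equation*}
Q^{-1}\Gamma \;=\; \begin{pmatrix} 1 & -q^T S^{-1}\Gamma_{2:n}\\ 0 & S^{-1}\Gamma_{2:n}\end{pmatrix}.
\end{equation*}

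Then I would left-multiply this expression by $B^{-1}$ and by $\Gamma^T$ separately. For $B^{-1}$ the top block row is unchanged while the $(2,2)$ block becomes $\Gamma_{2:n}^T S^{-1}\Gamma_{2:n}$. For $\Gamma^T$, the extra $q^T$ in its top block row exactly cancels the off-diagonal entry $-q^T S^{-1}\Gamma_{2:n}$, so the product is block diagonal with $(2,2)$ block $\Gamma_{2:n}^T S^{-1}\Gamma_{2:n}$. Both $B^{-1}Q^{-1}\Gamma$ and $\Gamma^T Q^{-1}\Gamma$ are therefore block upper triangular with identical diagonal blocks $1$ and $\Gamma_{2:n}^T S^{-1}\Gamma_{2:n}$, hence they share the eigenvalue multiset $\{1\}\cup\eig(\Gamma_{2:n}^T S^{-1}\Gamma_{2:n})$.

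The main subtlety is spotting the first-column identity between $\Gamma$ and $Q$, which is precisely what makes the pivotal $(2,1)$ block of $Q^{-1}\Gamma$ collapse; once this is noticed, the rest is routine block-matrix bookkeeping and no deeper spectral machinery is needed.
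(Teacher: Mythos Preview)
Your proposal is correct and follows essentially the same approach as the paper: both hinge on the observation that the first column of $\Gamma$ coincides with the first column of $Q$ (so $Q^{-1}\Gamma\,e_1=e_1$), then write $B^{-1}Q^{-1}\Gamma$ and $\Gamma^T Q^{-1}\Gamma$ as block upper triangular under the $1+(n-1)$ partition and read off identical diagonal blocks. The only cosmetic difference is that you compute the blocks explicitly via the Schur complement $S=Q_{2:n}-qq^T$ (and thereby also see that $\Gamma^T Q^{-1}\Gamma$ is block diagonal), whereas the paper leaves them as abstract submatrices $\mathbf{b_{12}},\mathbf{b_{22}}$; this is extra detail, not a different route.
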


The key step in the proof of \cref{lm: equal eigenvalues} is to rewrite $B^{-1}Q^{-1} \Gamma$ and $\Gamma^{T} Q^{-1} \Gamma$ into four block matrices and compare the eigenvalues of the block matrices. If the eigenvalues are the same for the same block matrices of $B^{-1}Q^{-1} \Gamma$ and $\Gamma^{T} Q^{-1} \Gamma$, then the eigenvalues of the original matrices are the same by the connections between eigenvalues and trace of matrices. 
See the detailed proof in Appendix \ref{lm with proof: equal eigenvalues}.

By \cref{lm: equal eigenvalues}, we have $\eig (\Gamma^{-1} Q B) = \eig (\Gamma ^{-1} Q \Gamma^{-T})$.  Combining this results with 
\cref{eq: GBS similar relations 2}, we conclude that
$\eig(I - B \Gamma ^{-1} Q) = \eig(I - \Gamma ^{-1} Q \Gamma^{-T})$.
$\Box$
}

\section{Proof of the Lower Bounds}\label{sec: lower bound proofs}

\rvs{In this section, we first present the example we will use for proving the lower bounds.
We then present the formal proofs of the lower bounds.
}

\subsection{Worst-case example}\label{subsec: worst case}

In the proof of \cref{thm: sGS-CD lower bound} and \cref{thm: GBS lower bound}, 
we will consider the following quadratic problem \cref{eq:Ac problem} 
\begin{equation}\label{eq:Ac problem}
\begin{split}
  \min_{x \in \dR^n }  \quad   & x^T A^TA x\\
\end{split}
\end{equation}
with a special case of matrix $A \in \dR^{n \times n}$ such that

\begin{equation}\label{eq:Ac def}
 Q \triangleq A^TA = \begin{bmatrix}
    1 &  c &  \dots  &  c   \\
    c &  1 &  \dots  &  c   \\
    \vdots &  \vdots  &   \ddots &  \vdots \\
    c &  c &   \dots &   1
 \end{bmatrix}
\end{equation}
for a given constant $c \in (0, 1)$.

$Q$ is a positive definite matrix, with one eigenvalue $1-c$ with multiplicity $n-1$ and one
eigenvalue $1-c + cn $ with multiplicity $1$. The condition number of $Q$ is
\begin{equation}\label{kappa expresssion}
  \kappa = \frac{1 - c + cn}{ 1 - c},
\end{equation}
where $\kappa$ approaches infinity when $c$ approaches $1$.

\begin{rmk}
This instance has been analyzed in \cite{sun2016worst} for C-CD and  \cite{lee2018random} for RP-CD. 
Using this example (for $c \rightarrow 1$), \cite{sun2016worst} shows C-CD can be $\mathcal{O}(n^2)$ times slower than R-CD.
Note that when analyzing sGS-CD, we also require $c \rightarrow 1$;
but when analyzing GBS-CD, we allow any $c \in (0, 1)$).
\end{rmk}

\begin{rmk}
(\textbf{Permutation Invariant}). The matrix $Q$ is a permutation-invariant matrix, i.e. $P^TQP = Q$ for any permutation matrix $P$.
This implies that even if one randomly permutes the coordinates at the beginning
and then apply sGS-CD or GBS-CD, the iterates do not change. Thus our lower bounds
would still hold. 
\end{rmk}


\subsection{Proof of the lower bound of sGS-CD}\label{subsec: proof sGS lower bound}

\begin{proof}
\rvs{As discussed in \cref{sec: proof techniques}, based on \cref{eq: sGS simpify proof},
it is sufficient to analyze the lower bound of $\|I -  A \Gamma^{-1} A^T \|$.}

Since we use exactly the same worst case example as in \cite{sun2016worst}, we use some of the results there. \cite[Theorem 3.1]{sun2016worst} can be interpreted as: 
\begin{theorem}\label{thm: CCD Lower bounded}(Theorem 3.1 in \cite{sun2016worst}, re-interpreted)

When solving problem \eqref{eq:Ac problem} where $A$ satisfies \eqref{eq:Ac def} using C-CD, for any initial point $x^0 \in \dR^n $, any $\delta \in (0,1]$, we have
    \begin{align}\label{eq: rewrite Sun Theorem 3_1 eq1}
        \frac{\|\hat{r}^k\|^2}{\|\hat{r}^0\|^2} \geq (1 - \delta)\left( 1 - \frac{ 2\pi^2 }{ n \kappa }  \right)^{2k + 2 },
    \end{align}
    where $\hat{r}^k = A \hat{x}^k$ and $\hat{x}^k$ is the iterate in C-CD at epoch $k$.
\end{theorem}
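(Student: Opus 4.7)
The plan is to reduce the lower bound on $\|\hat r^k\|^2$ to a spectral analysis of the residual-space update matrix. First I would observe that for $\min_x \tfrac{1}{2}\|Ax\|^2$ (so $b=0$ and $x^*=0$) the C-CD iterate obeys $\hat x^{k+1} = (I-\Gamma^{-1}Q)\hat x^k$ with $Q=A^TA$, and left-multiplying by $A$ and using the identity $A\Gamma^{-1}A^TA = (A\Gamma^{-1}A^T)A$ yields the clean residual recursion
\begin{equation*}
\hat r^{k+1} = M \hat r^k, \qquad M \triangleq I - A\Gamma^{-1}A^T .
\end{equation*}
The task then reduces to showing $\|M^{k+1}\hat r^0\|^2 \ge (1-\delta)\bigl(1-\tfrac{2\pi^2}{n\kappa}\bigr)^{2k+2}\|\hat r^0\|^2$ for a suitably chosen $\hat r^0$.

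Next I would exploit the structure of the worst-case example. Writing $Q=(1-c)I+c\mathbf{1}\mathbf{1}^T$, one finds that $\Gamma = I + cL'$ with $L'$ strictly lower triangular (hence nilpotent), so $\Gamma^{-1} = \sum_{j=0}^{n-1} (-c)^j (L')^j$ admits a closed form. This makes the characteristic equation of $M$ (or of the similar matrix $I-\Gamma^{-1}Q$) analytically tractable: the eigenvalues satisfy a polynomial whose coefficients depend only on $c$ and $n$. I would identify the largest-modulus eigenvalue $\mu(c,n)$ of $M$ and analyze its asymptotic behavior as $c \to 1$ (equivalently $\kappa \to \infty$), aiming to show $|\mu(c,n)| \ge 1 - \tfrac{2\pi^2}{n\kappa}$. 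The $\pi^2$ factor is expected to arise from a discrete-to-continuous comparison in the asymptotic expansion---either a tail estimate of the form $\sum_{j \ge 1} j^{-2} = \pi^2/6$ coming from the series expansion of $\Gamma^{-1}$, or equivalently from the smallest positive root of a trigonometric polynomial associated with the cyclic sweep.

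Finally, to pass from the spectral-radius bound to a lower bound on $\|M^{k+1}\hat r^0\|^2$ with the $(1-\delta)$ slack and exponent $2k+2$, I would pick $\hat r^0$ with a non-trivial component along a (possibly generalized) eigenvector associated with $\mu$. Because $M$ is non-normal, operator norm and spectral radius need not agree, but the Gelfand-type relation $\lim_k \|M^k u\|^{1/k} = |\mu|$ for generic $u$ guarantees that for any prescribed $\delta$ one can choose $\hat r^0$ with $\|M^k \hat r^0\| \ge \sqrt{1-\delta}\,|\mu|^{k+1}\|\hat r^0\|$ uniformly in $k$; the extra factor of $|\mu|$ (producing the exponent $2k+2$ after squaring) is precisely the slack that absorbs transient misalignment and makes the inequality hold for every $k \ge 0$. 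The main obstacle I anticipate is the explicit eigenvalue computation: $M$ is non-symmetric, $\Gamma^{-1}$ is dense despite the clean series formula, and pinning down the sharp $2\pi^2$ constant in the leading-order expansion of $|\mu|$ as $c \to 1$ will require either a clever algebraic reformulation (for example, a similarity to a companion-type or tridiagonal matrix) or a delicate asymptotic analysis in the small parameter $1-c$.
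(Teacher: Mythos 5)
The first thing to note is that the paper does not prove this statement at all: it is imported wholesale from the cited reference (the theorem is literally titled ``Theorem 3.1 in [sun2016worst], re-interpreted''), and the only original content the paper adds around it is the subsequent deduction that \eqref{eq: rewrite Sun Theorem 3_1 eq1} forces $\|I - A\Gamma^{-1}A^T\| \geq (1-\delta)\left(1 - \frac{2\pi^2}{n\kappa}\right)$. So you are attempting to prove the imported result itself. Your roadmap is directionally the same as the cited paper's: derive the residual recursion $\hat r^{k+1} = (I - A\Gamma^{-1}A^T)\hat r^k$ (this step is correct and matches the manipulation the paper itself performs right after the theorem), exploit the structure $Q = (1-c)I + c\mathbf{1}\mathbf{1}^T$ with $\Gamma = I + cL'$, and locate the dominant eigenvalue of the non-symmetric iteration matrix.

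The gap is that the two steps carrying all the difficulty are left unexecuted. First, the eigenvalue computation: you only speculate about where $2\pi^2$ comes from (a $\sum j^{-2}$ tail versus a trigonometric root), and you explicitly flag this as the main obstacle; without it there is no bound of the form $\rho(M) \geq 1 - \frac{2\pi^2}{n\kappa}$, which is the entire content of the theorem. Second, and more subtly, your passage from spectral radius to the uniform-in-$k$ norm bound does not work as stated. Gelfand's formula gives only $\lim_k \|M^k u\|^{1/k} = \rho(M)$ for generic $u$, which yields $\|M^k u\| \geq (\rho(M)-\epsilon)^k$ for large $k$; that degrades \emph{exponentially} relative to $\rho(M)^k$ and cannot be converted into $\|M^k \hat r^0\| \geq \sqrt{1-\delta}\,\rho(M)^{k+1}\|\hat r^0\|$ for \emph{every} $k \geq 0$. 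Since $M$ is non-symmetric its dominant eigenvalue $\mu$ may be complex; one must take $\hat r^0$ to be (say) the real part of the complex eigenvector $v = a + ib$, compute $\|M^k a\| = |\mu|^k\|\cos(k\theta)a - \sin(k\theta)b\|$, and then bound the alignment factor $\|\cos(k\theta)a - \sin(k\theta)b\|/\|a\|$ from below uniformly in $k$, which requires quantitative control on the geometry of $a$ and $b$ for this specific example --- this is exactly the slack the $(1-\delta)$ factor and the exponent $2k+2$ are designed to absorb, and it does not follow from any generic asymptotic argument. As it stands the proposal is a plausible plan whose two critical legs are missing, not a proof.
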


We claim that \cref{eq: rewrite Sun Theorem 3_1 eq1} implies
\begin{align}
    \|I -  A \Gamma^{-1} A^T  \| \geq (1 - \delta)\left( 1 - \frac{ 2\pi^2 }{ n \kappa }  \right). \label{bound Phat}
\end{align}
This is because
\begin{align*}
    \hat{r}^{k+1} &= A\hat{x}^{k+1} = A(I - \Gamma^{-1} Q)\hat{x}^t\\ &=A\hat{x}^{k} - A\Gamma^{-1}A^T A\hat{x}^k\\
    &=(I-A\Gamma^{-1}A^T)\hat{r}^k,
\end{align*}
and if \cref{bound Phat} does not hold, then
\[
\|\hat{r}^k\| \leq \|I-A\Gamma^{-1}A^T\|^{k+1}\|\hat{r}^0\|<(1 - \delta)\left( 1 - \frac{ 2\pi^2 }{ n \kappa }  \right)^{k + 1 }\|\hat{r}^0\|,
\]
which contradicts \cref{eq: rewrite Sun Theorem 3_1 eq1}.

By  \eqref{bound Phat}, we have 
\begin{align*}
    \| I -  A \Gamma^{-1} A^T \|^2 \geq \left( (1 - \delta) \left( 1 - \frac{ 2\pi^2 }{ n \kappa }  \right)\right)^2,
\end{align*}
and hence we have
    \begin{align}
        \frac{\|x^{k+1}\|^2}{\|x^k\|^2} = \rho(M_{\text{sGS-CD}})^2 = \|I -  A \Gamma^{-1} A^T\|^4 \geq \left( (1 - \delta) \left( 1 - \frac{ 2\pi^2 }{ n \kappa }  \right)\right)^4. \notag
    \end{align}
Then, we can obtain the desired lower bound on iterates of sGS-CD: 
\begin{align}
         \frac{\|x^k\|^2}{\|x_0\|^2} 
        \geq  (1 - \delta)^2\left( 1 - \frac{ 2\pi^2 }{ n \kappa }  \right)^{4k + 4 } 
         \geq  (1 - 2\delta)\left( 1 - \frac{ 4\pi^2 }{ n \kappa }  \right)^{2k + 2}. \notag
    \end{align}
  This also implies the same convergence rate of objective error $r^k = Ax^k$:
  \begin{align}
         \frac{\|r^k\|^2}{\|r_0\|^2} 
         \geq  (1 - 2\delta)\left( 1 - \frac{ 4\pi^2 }{ n \kappa }  \right)^{2k + 2}. \notag
    \end{align}
  
\end{proof}

\begin{rmk}
The above proof is based on symmetrizing the update matrix. In \cref{sec: sGS projection proof}, we will provide an alternative proof that is based on expressing sGS-CD as alternating projections.
\end{rmk}

\subsection{Proof of the lower bound of GBS-CD}\label{subsec: proof GBS lower bound}

\begin{proof}

\rvs{As discussed in \cref{sec: proof techniques}, based on \cref{eq: GBS simpify proof},
it is sufficient to analyze the lower bound of $\|I-   \Gamma^T Q^{-1} \Gamma \|$.}
The eigenvalues of $\Gamma^T Q^{-1} \Gamma$ do not have a clear form, so we decompose the matrix and analyze the spectral norm of the decomposition in \cref{lm: decompose Q inverse}. It can be proved simply by inspection (proof is provided in \cref{sec: sup proofs for GBS lower bound}).

\begin{proposition}\label{lm: decompose Q inverse}
\rvs{For $Q$ defined in \cref{eq:Ac def}, $\tilde{c} := 1-c$ and $J := ee^T$,}
    \vspace{-0.6em}
    \begin{align}\label{eq: decompose Q inverse}
        Q^{-1} = \frac{1}{\tilde{c}} I - \frac{c}{\tilde{c} (\tilde{c}+ cn)} J.
    \end{align}
\end{proposition}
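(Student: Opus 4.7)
My plan is to prove the formula by direct verification, exploiting the rank-one-plus-scalar structure of $Q$. The key observation is that $Q$ can be rewritten as $Q = \tilde{c} I + c J$, since the diagonal contributes $1$ via $\tilde{c} + c = 1$ and each off-diagonal entry contributes $c$ via $J$. Thus the whole problem reduces to inverting a matrix of the form $\alpha I + \beta J$, for which a matching candidate inverse should itself lie in the span of $\{I, J\}$.

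The main computational step is to verify $Q \cdot [\frac{1}{\tilde{c}} I - \frac{c}{\tilde{c}(\tilde{c}+cn)} J] = I$. To carry this out, I would first record the single identity that makes the whole calculation clean, namely
\begin{equation}
J^2 = (ee^T)(ee^T) = e(e^Te)e^T = n J,
\end{equation}
since $e \in \dR^n$ is the all-ones vector. Using this, the product expands to a linear combination of $I$ and $J$; the $I$ coefficient is immediately $1$, and I would then collect the $J$ coefficient and show it vanishes. Concretely, the $J$-coefficient becomes
\begin{equation}
\frac{c}{\tilde{c}} - \frac{c}{\tilde{c}+cn} - \frac{c^2 n}{\tilde{c}(\tilde{c}+cn)} = \frac{c(\tilde{c}+cn) - c\tilde{c} - c^2 n}{\tilde{c}(\tilde{c}+cn)},
\end{equation}
which simplifies to $0$ after canceling $c\tilde{c}$ and $c^2 n$ in the numerator.

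There is essentially no hard step: the whole proof is a one-line verification once $J^2 = nJ$ is in hand. An equally short alternative would be to invoke the Sherman–Morrison formula applied to $Q = \tilde{c} I + c\, e e^T$, which gives the same expression; I would mention this as a remark but prefer the elementary verification since it keeps the proof self-contained. Uniqueness of the inverse then ensures the formula gives $Q^{-1}$ and not merely a right inverse.
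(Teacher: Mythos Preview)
Your proof is correct. The paper takes the route you mention only as a remark: it writes $Q = \tilde{c} I + c\,ee^T$ and applies the Sherman--Morrison formula directly to obtain the inverse, whereas you make direct verification the main argument and relegate Sherman--Morrison to a side comment. Both approaches are equally short and elementary; your direct verification has the minor advantage of being self-contained (no named formula to cite), while the paper's approach makes it clearer \emph{where} the expression comes from rather than merely confirming it post hoc. Either is entirely adequate for a result this simple.
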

Denote $a = \frac{1}{\tilde{c}}$ and $b = \frac{c}{\tilde{c} (\tilde{c}+ cn)}$, we have that 
\begin{subequations}\label{eq:decompose LQL}
\begin{align}
     & \|\Gamma^T Q^{-1} \Gamma\| \notag \\
    = & \|\Gamma^T(aI - bJ)\Gamma\| \notag\\
    = & \|a \Gamma^T\Gamma - b\Gamma^TJ\Gamma\| \notag\\ 
    \geq & \left\lvert \|a \Gamma^T\Gamma\| - \|b\Gamma^TJ\Gamma\| \right\lvert \label{eq: decompose LQL step 1} \\
    = & \left\lvert \|a \Gamma\Gamma^T\| - \|b\Gamma^TJ\Gamma\| \right\lvert \notag\\
    = & \left\lvert a\| \Gamma\Gamma^T - Q + Q\| - \|b\Gamma^TJ\Gamma\| \right\lvert \notag\\
    \geq & \left\lvert a \left\lvert \|\Gamma\Gamma^T - Q\| - \|Q\|\right\lvert - b \|\Gamma^TJ\Gamma\| \right\lvert \label{eq: decompose LQL step 2},
\end{align}
\end{subequations}
where both the inequalities \cref{eq: decompose LQL step 1} and \cref{eq: decompose LQL step 2} use the reverse triangle inequality, i.e.,  
\begin{subequations}
\begin{align}
    \|a \Gamma^T\Gamma - b\Gamma^TJ\Gamma\| 
    \geq & \left\lvert \|a \Gamma^T\Gamma\| - \|b\Gamma^TJ\Gamma\| \right\lvert, \\
    \| \Gamma\Gamma^T - Q + Q\| \geq & \left\lvert \|\Gamma\Gamma^T - Q\| - \|Q\|\right\lvert.
\end{align}
\end{subequations}
We introduce the following propositions to compute the spectral norm of each term in \cref{eq: decompose LQL step 2}. Proofs of these propositions are provided in the \cref{sec: sup proofs for GBS lower bound}.

\begin{proposition}\label{prop: a few props in GBS proof}
    \rvs{For $Q$ defined in \cref{eq:Ac def}, we have}
    \vspace{-0.6em}
    \begin{align}
        \|Q\| & = 1-c + cn; \\
        \quad \|\Gamma^TJ\Gamma\| & = n + cn(n-1) + \left(c^2 n(n-1)(2n-1)\right) / 6; \\
        \|\Gamma\Gamma^T - Q\| & \gtrapprox (c^2  4 n^{2}) / \pi^{2}.
    \end{align}
\end{proposition}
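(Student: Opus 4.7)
The plan is to handle each of the three norm identities separately, in increasing order of difficulty. For the first one, $\|Q\| = 1-c+cn$ is immediate from the eigen-structure of $Q$ that is already stated in the paragraph following \cref{eq:Ac def}: $Q$ has eigenvalue $1-c$ with multiplicity $n-1$ and $1-c+cn$ with multiplicity $1$, and since $Q$ is symmetric positive definite, $\|Q\|$ equals its largest eigenvalue.

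For $\|\Gamma^{T} J \Gamma\|$, the key observation is that $J = ee^{T}$ is rank one, hence $\Gamma^{T} J \Gamma = (\Gamma^{T} e)(\Gamma^{T} e)^{T}$ is also rank one, and its spectral norm is just $\|\Gamma^{T} e\|^{2}$. So first I would write down $\Gamma$ explicitly: it is lower triangular with $1$'s on the diagonal and $c$'s strictly below. Then the $i$-th entry of $\Gamma^{T} e$ equals the $i$-th column sum of $\Gamma$, which is $1 + c(n-i)$. Summing squares gives
\[
\|\Gamma^{T} e\|^{2} = \sum_{i=1}^{n}(1 + c(n-i))^{2} = \sum_{j=0}^{n-1}(1 + cj)^{2} = n + cn(n-1) + \frac{c^{2} n(n-1)(2n-1)}{6},
\]
which is exactly the claimed expression.

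The main obstacle is the third bound, $\|\Gamma\Gamma^{T} - Q\| \gtrapprox 4 c^{2} n^{2}/\pi^{2}$. My approach is to first compute $\Gamma\Gamma^{T} - Q$ in closed form. A direct entrywise calculation using the structure of $\Gamma$ (for $i \le j$, $(\Gamma\Gamma^{T})_{ij} = (i-1)c^{2} + \Gamma_{ji}$) shows that the $c$-part and the diagonal $1$'s of $\Gamma\Gamma^{T}$ cancel exactly with $Q$, yielding the clean identity
\[
(\Gamma\Gamma^{T} - Q)_{ij} = c^{2}\bigl(\min(i,j) - 1\bigr).
\]
Thus $\|\Gamma\Gamma^{T} - Q\| = c^{2}\|M\|$, where $M_{ij} = \min(i,j) - 1$. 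Because the first row and column of $M$ are zero, $\|M\|$ equals $\|M'\|$ where $M'$ is the classical $(n-1)\times(n-1)$ ``min-matrix'' with entries $\min(i,j)$.

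To finish, I would invoke the well-known spectral analysis of the min-matrix: its inverse is the tridiagonal matrix with $2$ on the main diagonal (except a trailing $1$) and $-1$ on the sub- and super-diagonals, whose eigenvalues are $4\sin^{2}\!\bigl((2k-1)\pi/(2(2n-1))\bigr)$ for $k = 1, \dots, n-1$. The smallest of these is $\Theta(1/n^{2})$, so the largest eigenvalue of $M'$ itself is asymptotically $4(n-1)^{2}/\pi^{2} \sim 4 n^{2}/\pi^{2}$ as $n \to \infty$. Multiplying by $c^{2}$ gives the stated asymptotic lower bound. The main obstacle is keeping the constant in front sharp enough to yield the $4/\pi^{2}$ factor; I would do this by either citing the eigenvalue formula for the tridiagonal matrix directly, or by producing an explicit trial vector $v_{k} = \sin(k\pi/(2n-1))\cdot(\text{something})$ and computing the Rayleigh quotient $v^{T} M' v / \|v\|^{2}$ to obtain the desired lower bound $\gtrapprox 4n^{2}/\pi^{2}$ without going through the inverse.
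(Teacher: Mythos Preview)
Your proposal is correct and follows essentially the same approach as the paper for all three parts: the eigenvalue argument for $\|Q\|$, the rank-one reduction $\|\Gamma^{T}J\Gamma\| = \|\Gamma^{T}e\|^{2}$ with the explicit column sums $1+c(n-i)$, and the entrywise identification $(\Gamma\Gamma^{T}-Q)_{ij}=c^{2}(\min(i,j)-1)$ together with the observation that the first row and column vanish so that the norm reduces to that of the $(n-1)\times(n-1)$ min-matrix. The only difference is in the last step: the paper simply cites an external result (Proposition~3.2 of \cite{Sra2014}) giving $\|[\min(i,j)]\| = (2+2\cos\theta_{n})^{-1}\gtrapprox 4n^{2}/\pi^{2}$, whereas you propose to derive this bound yourself via the tridiagonal inverse or a Rayleigh-quotient trial vector; your route is more self-contained but otherwise equivalent.
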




     

We apply the propositions and plug in the definitions of $a$ and $b$ into \cref{eq:decompose LQL}:
\begin{subequations}\label{eq: plug in coefficients}
\begin{align}
\vspace{-1em}
    & \| \Gamma^T Q^{-1} \Gamma \| 
    \notag\\
    \geq & \left| a \left| \|\Gamma\Gamma^T - Q\| - \|Q\|\right| - b \|\Gamma^TJ\Gamma\| \right| 
    \label{eq: two abs}
    \\
    \geq &\left( a \left(  \frac{4 c^2 n^{2}}{\pi^{2}} - (1-c + cn) \right) - b \left(n + cn(n-1) + \frac{c^2}{6} n(n-1)(2n-1)\right) \right)
    \label{eq:subeq in GBS}  
    \\
    = &  \frac{1}{1-c} \left( \left(\frac{4c^2}{\pi^2} - \frac{c^2}{3}\right) n^2 + \left(\frac{c^2}{2} - 2c\right)n - \left(2 - 2c + \frac{c^2}{6}\right)\right).
    \label{eq: GBS lower bound long line 1}
\end{align}
\end{subequations}

To obtain \cref{eq:subeq in GBS}, we remove the two absolute value signs in the RHS of \cref{eq: two abs} by assuming expressions inside the absolute value are non-negative. Note that the term $\frac{4 c^2 n^{2}}{\pi^{2}} - (1-c + cn) $ is a quadratic function in $n$ with positive quadratic coefficient, so for given $c$, we can solve for $n$ such that the expression inside the absolute value is nonnegative. In particular, for any $n$ that satisfies
\begin{align}\label{eq: condition on n}
    8c n \geq \pi^2 \left(1 + \sqrt{1 + (16 (1-c)) / (\pi^2)} \right),
\end{align}
$  (4 c^2 n^{2})/(\pi^{2}) - (1-c + cn)$ is nonnegative. The RHS of \cref{eq: condition on n} is approaching positive infinity when $c$ is approaching $0$, and it is approaching zero when $c$ is approaching $1$.

Assuming \eqref{eq: condition on n} holds, the expression in \eqref{eq:subeq in GBS}, which can be re-written as \cref{eq: GBS lower bound long line 1}, is also a quadratic function in $n$ with positive qudratic coefficient. Therefore, for given $c$, if $n$ satisfies
\begin{align}\label{eq: condition2 on n}
    n \geq \frac{\left(2 c-c^{2}\right)+\sqrt{\left(\frac{c^{2}}{2}-2 c\right)^{2}-4\left( \frac{4c^{2}}{\pi^{2}}-\frac{c^{2}}{3}\right)\left(2-2 c+\frac{c^{2}}{6}\right)}}{\left( \frac{8c^{2}}{\pi^{2}}-\frac{2c^{2}}{3}\right)},
\end{align}
we have that the expression on the RHS of \cref{eq:subeq in GBS} is non-negative. Although \cref{eq: condition2 on n} looks complicated, we observe that when $c$ is relatively large, the RHS of \cref{eq: condition2 on n} is small. For example, when $c=0.9$, the RHS of \cref{eq: condition2 on n} is around $20$. Therefore, assuming \cref{eq: condition on n} and \cref{eq: condition2 on n} hold, we can remove both absolute value signs in \eqref{eq: two abs} and the inequality \cref{eq:subeq in GBS} holds. 

Define the following constants
$$     c_0 = \frac{4}{\pi^2} - \frac{1}{3} = \frac{12-\pi^2}{3\pi^2},\quad 
    c_1 = \frac{c^2}{2} - 2c, \quad
    c_2 = 2 - 2c + \frac{c^2}{6}.$$
Plugging in $\kappa = \frac{1-c+cn}{1-c} = 1 + \frac{c}{1-c}n$ and $n = \frac{(1-c)\kappa -1 + c}{c}$ in \cref{eq: GBS lower bound long line 1}, we can extract an $n$ from \cref{eq: GBS lower bound long line 1} and rewrite it as
\begin{align}
      n \left(\kappa   c c_0  - c c_0  + \frac{c_1}{1-c} - \frac{c_2}{(1-c)n} \right)  = \mathcal{O}(n \kappa c c_0).
\end{align}

Therefore, once we satisfy the condition \cref{eq: condition on n}, we obtain
\begin{align}
    \|r^k\| & \geq (1 - \delta) \left( 1 - \frac{ 3\pi^2}{ (12-\pi^2) n \kappa c }  \right)^{k + 1 } \|r^0\|, \notag \\
    f(x^k) - f^* &\geq (1 - \delta) \left( 1 - \frac{ 3\pi^2}{ (12-\pi^2) n \kappa c }  \right)^{2k + 2 } ( f(x^0) - f^*) . \notag
    \end{align}

\end{proof}

\rvs{
\subsection{Summary of Proof Techniques}

We have adopted four different approaches
for the four desirable bounds
(upper and lower bounds of sGS-CD and GBS-CD).
 As for the single proof, the GBS-CD lower bound
is the most complicated one. 
However, from a global perspective of the whole paper, a
major challenge is to choose the right approach for each of them. 
 We summarize the proof approaches of the four situations
 in Table \ref{tab:summary of proof}.
 


\begin{table}[]
    \centering
\caption{Main proof approaches: high-level method and key inequalities.
 The relations in GBS lower bound
 utilized the special structure of the example;
 other inequalities are generic algebraic inequalities
 that hold for any symmetric $Q$ and any $Z, J$.
  In this table, $Q^{-1} = aI - bJ$, $Z = I - A \Gamma^{-1} A^T $
 and $J = e e^T $ where $e = (1; 1; \dots; 1).$
 }
\renewcommand\arraystretch{1.2}
\begin{tabular}{|c|c|c|}
\hline
    &      upper bound              &   lower bound \\
\hline
    &    reduction of error:        &  i) $M \sim Z^T Z$; \\
sGS &  both forward/backward passes & ii) $\rho( Z^T Z ) = \| Z\|^2 \geq \rho(Z)^2 $;  \\
    &   cause reduction.            &  iii) $\rho(Z) $ can be lower bounded. \\
\hline
    &   bound update matrix:                             &   i) $\rho(M) = 1 - 1/\| \Gamma Q^{-1} \Gamma^{T} \|$  \\
    &  i) $\rho(M) = \| I - \Gamma^{-1} Q \Gamma^{-T} \|$  &   $  \geq 1 - 1/ \textbf{(} a \| \Gamma \Gamma^T  - Q  \|$   \\
GBS &   $=1 - 1/\| \Gamma Q^{-1} \Gamma^{T} \| $;        &   $- a \| Q \| - b \| \Gamma^T J \Gamma   \| \textbf{)}$;   \\
    &  ii) $\| \Gamma Q^{-1} \Gamma^{T}\|$ can be           & ii)  each term in RHS of i)              \\
    &    upper bounded.                                  &   can be  upper or lower bounded.        \\
\hline
\end{tabular}
\label{tab:summary of proof}
\end{table}

}

\section{Numerical Experiments on CD}
\label{sec:experiments}

In this section, we provide some empirical results of the worst-case performance of CD with sGS and GBS update rules by solving unconstrained quadratic problems \rvs{defined in \cref{eq:Ac problem}}.

In the first experiment, we initialize the solutions from the uniform distribution between $0$ and $1$. In \cref{tab:iterations for c equals 0.8}, each column presents the number of epochs needed for an algorithm to solve \cref{eq:Ac problem} 
with various problem sizes $n$ when $c = 0.8$ up to relative error accuracy $1\mathrm{e}{-8}$. 
To make the comparison of various methods more clear, we create
 \cref{tab:ratios for c equals 0.8} based on \cref{tab:iterations for c equals 0.8}.
The columns under ``Ratio of GBS-CD'' of \cref{tab:ratios for c equals 0.8} are created by dividing the number of epochs for GD, C-CD, R-CD, and RP-CD in \cref{tab:iterations for c equals 0.8} by that of GBS-CD.
The resulting ratios can be interpreted as the acceleration ratios of these algorithms over
GBS-CD. For instance, the entry $17360.0 $ in the fourth column of  \cref{tab:ratios for c equals 0.8} means that GBS-CD takes $17360$ times more epochs than R-CD to solve \cref{eq:Ac problem} when $c = 0.8$ and $n = 100$ (or simply put, GBS-CD is $17360$ times slower than R-CD). 
Similarly,  the columns under ``Ratio of GBS-CD'' of \cref{tab:ratios for c equals 0.8} 
display the acceleration ratios of various algorithms over GBS-CD.

Recall we have shown that GBS-CD can be $\mathcal{O}(n)$ times slower than GD in the worst case. In the second column of \cref{tab:ratios for c equals 0.8}, the ratios for $n = 100$ and $n = 600$ are $8.7$ and $51.6$ correspondingly. Note that $\frac{51.6}{8.7} = 5.88 \approx 6$, which matches our theoretical analysis. In the fourth column, we observe that the relative ratio is $\frac{17360.0}{499.3} = 34.8 \approx  36$, 
which also matches our theoretical analysis that GBS-CD is $\mathcal{O}(n^2)$ times slower than R-CD in the worst case. 
In general, \cref{tab:ratios for c equals 0.8} shows that the experimental results match our theoretical analysis.


In the second experiment, we compare the spectral radius of the iteration matrices for various methods. 
In \cref{tab:spectral radius for c equals 0.1 }, we present $1 - \rho(M)$ where $M$ is the (expected) iteration matrix of GBS-CD, sGS-CD, C-CD, R-CD, RP-CD and GD for $c = 0.5, 0.8, 0.99$ and $n = 20, 100, 1000$. The first column shows the value of $c$ in the matrix $Q$ in \cref{eq:Ac problem}, and the next six columns present the values of $1 - \rho(M)$ for each algorithm.
In the last four columns, we divide the values of $1 - \rho(M) $ of C-CD, R-CD, RP-CD and GD by that of GBS-CD to obtain ratios of the spectral radius of the (expected) update matrices. We omit the ratios of various algorithms over sGS-CD, since they are similar to the ones over GBS-CD. We observe that, as $c$ approaches $1$, the gap of the ratios between GD and GBS-CD  grows in $\mathcal{O}(n)$ as $n$ increases, and the gap of the ratios between RP-CD (or R-CD) and GBS-CD grows in $\mathcal{O}(n^2)$. The gap of the ratios between GBS-CD and C-CD is a constant $2$ for different $n$. These observations match the comparison of the number of iterations  in \cref{tab:ratios for c equals 0.8}.

\begin{table}[htbp]
  \centering
  \caption{Comparison of the epochs of GBS-CD, sGS-CD, C-CD, R-CD, RP-CD and GD for solving \cref{eq:Ac problem} when $c = 0.8$. The numbers represent the epochs needed to achieve relative error $1\mathrm{e}{-8}$.}
    \begin{tabular}{|c|c|c|c|c|c|c|}
    \hline
    \textbf{n} & \textbf{GBS-CD}  & \textbf{sGS-CD}  & \textbf{C-CD}   & \textbf{GD} & \textbf{R-CD} & \textbf{RP-CD} \\
    \hline
    100   & 51431 & 51431   & 25749   & 5942   & 103   & 88  \\
    \hline
    200   & 200184 & 200184   & 100377   & 11617   & 103   & 89 \\
    \hline
    600  & 1735996 & 1735996 & 869123 & 33627  & 100  & 88  \\
    \hline
    \end{tabular}%
  \label{tab:iterations for c equals 0.8}%
\end{table}%

\begin{table}[htbp]
  \centering
  \caption{Ratios of the epochs of GBS-CD and sGS-CD over the epochs of C-CD, R-CD, RP-CD and GD for solving \cref{eq:Ac problem} when $c = 0.8$. 
  }
    \begin{tabular}{|c|c|c|c|c|c|c|c|c|}
    \hline
    \multirow{2}[4]{*}{n} & \multicolumn{4}{c|}{Ratio of GBS-CD} & \multicolumn{4}{c|}{Ratio of sGS-CD} \\
\cline{2-9}          & GD & C-CD  & R-CD  & RP-CD & GD & C-CD  & R-CD  & RP-CD \\
    \hline
    100   & 8.7 & 2.0   & 499.3   & 584.4  & 8.7 & 2.0   & 499.3   & 584.4  \\
    \hline
    200   & 17.2 & 2.0   & 1943.5  & 2249.3   & 17.2 & 2.0   & 1943.5  & 2249.3 \\
    \hline
    600  & 51.6 & 2.0  & 17360.0 & 19727.2 & 51.6 & 2.0  & 17360.0 & 19727.2 \\
    \hline
    \end{tabular}%
  \label{tab:ratios for c equals 0.8}%
\end{table}%

\begin{table}[!htbp]
  \centering
  \caption{Comparison of GBS-CD, sGS-CD, C-CD, R-CD, RP-CD and GD for solving \cref{eq:Ac problem} when $c = 0.5,0.8,0.99$}
  \tabcolsep=0.085cm
    \begin{tabular}{|c|c|c|c|c|c|c|c|c|c|c|}
    \hline
    \multirow{2}[4]{*}{c} & \multicolumn{6}{c|}{1 - $\rho$(M)}               & \multicolumn{4}{c|}{Acceleration Ratio} \\
\cline{2-11}          & GBS-CD & sGS-CD & GD    & C-CD  & R-CD  & RP-CD & GD    & C-CD  & R-CD  & RP-CD \\
    \hline
    \multicolumn{11}{|c|}{n = 20} \\
    \hline
    0.5   & 4.2e-2 & 4.2e-2 & 4.8e-2 & 7.6e-2 & 4.0e-1 & 5.2e-1 & 1.1   & 1.8   & 9.3   & 12.1 \\
    \hline
    0.8   & 7.4e-3 & 7.4e-3 & 1.2e-2 & 1.4e-2 & 1.8e-2 & 2.0e-1 & 1.6   & 1.9   & 2.4   & 26.8 \\
    \hline
    0.99  & 2.5e-4 & 2.5e-4 & 5.0e-4 & 4.9e-4 & 1.0e-2 & 1.0e-2 & 2.0   & 2.0   & 40.0  & 41.2 \\
    \hline
    \multicolumn{11}{|c|}{n = 100} \\
    \hline
    0.5   & 1.9e-3 & 1.9e-3 & 9.9e-3 & 3.8e-3 & 3.9e-1 & 5.0e-1 & 5.1   & 2.0   & 202.1 & 259.1 \\
    \hline
    0.8   & 3.0e-4 & 3.0e-4 & 2.5e-3 & 6.4e-4 & 1.8e-1 & 2.0e-1 & 8.1   & 2.1   & 586.3 & 651.5 \\
    \hline
    0.99  & 1.0e-5 & 1.0e-5 & 1.0e-4 & 2.0e-5 & 1.0e-2 & 1.0e-2 & 10.0  & 2.0   & 1e3 & 1e3 \\
    \hline
    \multicolumn{11}{|c|}{n = 1000} \\
    \hline
    0.5   & 1.9e-5 & 1.9e-5 & 9.9e-4 & 3.9e-5 & 3.9e-1 & 5.0e-1 & 50.7  & 2.0   & 1.9e4 & 2.5e4 \\
    \hline
    0.8   & 3.0e-6 & 3.0e-6 & 2.5e-4 & 6.2e-6 & 1.8e-1 & 2.0e-1 & 81.2  & 2.0   & 5.8e4 & 6.4e4 \\
    \hline
    0.99  & 1.0e-7 & 1.0e-7 & 1.0e-5 & 2.0e-7 & 1.0e-2 & 1.0e-2 & 100.0 & 2.0   & 9.9e4 & 9.9e4 \\
    \hline
    \end{tabular}%
  \label{tab:spectral radius for c equals 0.1 }%
\end{table}%




\section{Symmetrization Rules for ADMM}\label{sec: ADMM results}

\subsection{Motivation in ADMM}

\rvs{In previous sections, we have analyzed the worst-case performance of GBS-CD and sGS-CD in solving unconstrained quadratic problems. In this section,
we will discusss the performance of ADMM
with symmetrized update rules GBS and sGS.}

To solve large-scale problems with linear constraints, a natural idea is to combine CD methods with augmented Lagrangian method to obtain the so-called \rvs{ADMM algorithms}
\cite{glowinski1975approximation,chan1978finite,gabay1976dual,boyd2011distributed}.
Unlike CD where any reasonable update order can lead to convergence \cite{tseng2001convergence},
 for multi-block ADMM, even the most basic cyclic version does not converge \cite{chen2016direct}. 
 Small step-size versions of multi-block ADMM can be shown to converge with extra assumptions on the problem (see, e.g.
\cite{hong2012linear, han2012note,chen2013convergence,he2013proximal,he2013full, lin2014convergence,hong2014block,cai2014direct,sun2014convergent,lin2014global,han2014augmented,li2014schur,li2015convergent,lin2015iteration,deng2017parallel}), but the lesson from CD methods is that the speed advantage of CD exactly comes from large stepsize, thus we are more interested in ADMM with large dual step-size (such
as dual step-size $1$).




We are only aware of three major variants of multi-block ADMM with dual stepsize $1$ that are convergent
 in numerical experiments: Gaussian back substitution ADMM (GBS-ADMM) \cite{he2012alternating,he2012convergence},
symmetric Gauss-Seidel ADMM (sGS-ADMM) \cite{li2016schur,chen2017efficient} and randomly permuted ADMM (RP-ADMM) \cite{sun2015expected}. 
The first two use deterministic orders, and the third uses a random order. 
\rvs{It is known that}
the theoretical analysis of random permutation is notoriously difficult even for CD and SGD \cite{recht2012beneath,sun2015expected,wright2017analyzing,lee2018random,gurbuzbalaban2015random,gurbuzbalaban2018randomness},
and for RP-ADMM only the expected convergence for quadratic objective function is \rvs{given} \cite{sun2015expected, chen2015convergence}.
In contrast, GBS-ADMM enjoys strong theoretical guarantee as the convergence for separable 
convex objective with linear constraints is proved \cite{he2012alternating}.
For sGS-ADMM, the convergence guarantee is proved for a sub-class of convex problems. 

If our purpose is just to resolve the divergence issue of cyclic ADMM,
then GBS-ADMM and sGS-ADMM both provide rather satisfactory (though not perfect) theoretical guarantee on the convergence.
However, the major purpose of using block decomposition is to solve large-scale problems, thus
the convergence speed is also very important.
What can we say about the convergence speed of GBS-ADMM and sGS-ADMM? 
Are they provably faster the one-block version, just like R-CD \cite{nesterov2012efficiency, leventhal2010randomized}? 
\rvs{Previously, we have analyzed GBS and sGS update orders for unconstrained problems, and proved that they do not improve
the convergence rate of cyclic order in the worst case.}
\rvs{In the rest of the paper, we will present an example
that the convergence speed of sGS-ADMM and GBS-ADMM is
much slower than the one-block version, and RP-ADMM.}

\subsection{Two ADMM Algorithms}

To solve the following linearly constrained problem 
\begin{equation}\label{eq: general constrained problem}
\begin{split}
    \min_{x \in \dR^n }  \quad   & f(x_1, \cdots, x_n) \\
    \text{s.t.} \quad & \sum_i A_i x_i  = b,
\end{split}
\end{equation}
we consider the augmented Lagrangian function
\begin{equation}\label{augmented Lag func}
\mathcal{L}(x_1, \dots, x_n ; \lambda ) = f(x_1, \dots, x_n) - \lambda^T (\sum_i A_i x_i  - b) + \frac{\sigma}{2} \| \sum_i A_i x_i - b \|^2.
\end{equation}

\begin{algorithm}
\caption{\normalsize sGS-ADMM}
\label{Algorithm: n-block sGS-ADMM}
\begin{algorithmic}[1]
\FOR{$k=0,1,2,\ldots,$}
    \STATE  \textit{Forward Pass}:
    \STATE $\quad x_{1}^{k+\frac{1}{2}} \in \argmin_{x_{1}} \mathcal{L}\left(x_{1}, x_{2}^{k-1}, x_{3}^{k-1}, \ldots x_{n}^{k-1}; \lambda^k \right)  +  \frac{\sigma}{2}\|x_{1} - x_{1}^{k+\frac{1}{2}}\|^2_{T_1}$
    \STATE $\quad \cdots$
    \STATE $\quad x_{n}^{k+\frac{1}{2}} \in \argmin_{x_{n}} \mathcal{L}\left(x_{1}^{k+\frac{1}{2}}, x_{2}^{k+\frac{1}{2}}, \ldots x_{n};  \lambda^k \right) + \frac{\sigma}{2}\|x_{n} - x_{n}^{k+\frac{1}{2}}\|^2_{T_n}$
    \STATE  \textit{Backward Pass}:
    \STATE $\quad x_{n}^{k+1}  = x_{n}^{k+\frac{1}{2}}$
    \STATE $\quad x_{n-1}^{k+1}  \in \argmin_{x_{n-1}} \mathcal{L}\left(x_{1}^{k+\frac{1}{2}}, \ldots, x_{n-1}, x_{n}^{k+1};  \lambda^k \right) + \frac{\sigma}{2}\|x_{n-1} - x_{n-1}^{k+1}\|^2_{T_{n-1}} $
    \STATE $\quad \cdots$
    \STATE $\quad x_{1}^{k+1} \in \argmin_{x_{1}} \mathcal{L}\left(x_{1}, x_{2}^{k+1}, x_{3}^{k+1}, \ldots x_{n}^{k+1};  \lambda^k \right) + \frac{\sigma}{2}\|x_{1} - x_{1}^{k+1}\|^2_{T_1}$
    \STATE  \textit{Dual Update}: $\lambda^{k+1} = \lambda^k - \beta(  A_1 x_1^{k+1} + \dots +  A_n x_n^{k+1} - b ).$
\ENDFOR
\end{algorithmic}
\end{algorithm}

The general sGS-ADMM is defined in \cref{Algorithm: n-block sGS-ADMM}  for solving
the constrained problem \cref{eq: general constrained problem}, where each $T_i$ is a self-adjoint positive semidefinite linear operator that satisfies the conditions mentioned in \cite{li2016schur}. 
As our goal is to understand the worst-case convergence rate,
we consider  a special setting that $T_i = 0, \ \forall i$,
which for the unconstrained problems reduces to the sGS-CD \cref{Algorithm: n-block sGS-CD}. 
In the section of experiments, we will consider sGS-ADMM with $T_i = 0, \forall i$, for linearly constrained problems. 







When applying GBS update order to ADMM for solving linearly \rvs{constrained} problem \cref{eq: general constrained problem}, the prediction step 
is a regular primal update of Cyclic ADMM (C-ADMM). 
After the prediction step, GBS-ADMM updates the dual variable using the predicted primal variable $\tilde{x}^k$. 

To derive the correction step of GBS-ADMM, we first denote $\Omega \triangleq A^T A$ where $A$ is the linear constraint matrix in problem \cref{eq: general constrained problem}, and $\Gamma_{\Omega}$ is the lower triangular matrix of $\Omega$ (with the diagonal entries).

We define the correction matrix $F$ as
\begin{align}\label{eq: matrix F in GBS-ADMM}
 F \triangleq  \begin{bmatrix}
1 &   0 \\
0 &[\Gamma_{\Omega}^{-T}]_{2:n}
\end{bmatrix}
\begin{bmatrix}
1 &   0 \\
0 & [D_{\Omega}]_{2:n}
\end{bmatrix}, 
\end{align}
where 
$\Gamma_{\Omega}^{-T}$ is the transpose of the inverse of $\Gamma_{\Omega}$ and $D_{\Omega}$ is the diagonal matrix of $\Omega$. $[\Gamma_{\Omega}^{-T}]_{2:n}$ and $[D_{\Omega}]_{2:n}$ are the sub-matrices by excluding the first row and first column of $\Gamma_{\Omega}^{-T}$ and $D_{\Omega}$ respectively. 
Detailed GBS-ADMM algorithm is presented in \cref{Algorithm: n-block GBS-ADMM}.

\begin{rmk}
Strictly speaking, GBS-CD (defined in \cref{Algorithm: n-block GBS-CD}) is not a special form of GBS-ADMM (defined in \cref{Algorithm: n-block GBS-ADMM}) to solve unconstrained problems. Although the correction matrix $B$ \cref{eq: matrix B in GBS-CD} and $F$ \cref{eq: matrix F in GBS-ADMM} for GBS-CD and GBS-ADMM are in the same form, they are different: $B$ is constructed from $Q=A^TA$, where $A$ appears in the quadratic objective of \cref{least square optimization formulation for sGS-CD}, but $F$ is constructed from $\Omega=A^TA$ where $A$ is the matrix of linear constraint in \cref{eq: general constrained problem}.
Note that the original GBS-ADMM is only defined
for a separable convex objective function.
We presented a version for general convex objective,
but as our goal is to reveal the limitation of GBS-ADMM,
we do not need to consider the most general form, but only need
to study some special forms (objective is zero or quadratic). 

If the objective function is quadratic as in \cref{least square optimization formulation for sGS-CD}, we can define a new
version of GBS-ADMM (which can be called obj-GBS-ADMM) as follows: for the correction step, we construct the matrix $F$ from the matrix in the objective function instead of from the matrix in the linear constraint. GBS-CD defined in \cref{Algorithm: n-block GBS-CD} can be viewed as a special form of this obj-GBS-ADMM for unconstrained quadratic problems.

For theoretical analysis, we prove that GBS-CD can be very slow.
For GBS-ADMM, we provide numerical experiments to show that it can be very slow for a bad example.
 The theoretical evidence for GBS-CD and the empirical evidence for GBS-ADMM together indicate that GBS-ADMM can be very slow in the worst case.
%
 
%



\end{rmk}

\begin{algorithm}
\caption{\normalsize GBS-ADMM }
\label{Algorithm: n-block GBS-ADMM}
\begin{algorithmic}
\FOR{$k=0,1,2,\ldots,$}
    \STATE  \textit{Prediction Step}:
    \STATE $\quad \tilde{x}_{1} \in \argmin_{x_{1}} \mathcal{L}\left(x_{1}, x_{2}^{k-1}, x_{3}^{k-1}, \ldots x_{n}^{k-1}; \lambda^k \right)$
    \STATE $\quad \cdots$
    \STATE $\quad \tilde{x}_{n} \in \argmin_{x_{n}} 
    \mathcal{L}\left(\tilde{x}_{1}^{k-1}, \tilde{x}_{2}^{k-1}, \ldots
    \tilde{x}_{n-1}^{k-1}, x_{n}; \lambda^k \right)$
    \STATE  \textit{Dual Update}: $\tilde{\lambda}^{k} = \lambda^k - \sigma (  A_1 \tilde{x}_{1} + \dots +  A_n \tilde{x}_{n} - b ) $
    \STATE  \textit{Correction Step}:
    \STATE $\quad x^{k+1} = x^k - \beta F(x^k - \tilde{x}^k)$
    \STATE $\quad \lambda^{k+1} = \lambda^k - \beta (\tilde{\lambda}^{k+1} - \lambda^k)$
\ENDFOR
\end{algorithmic}
\end{algorithm}

\subsection{Numerical Experiments on ADMM}
\label{sec:experiments ADMM}

In this section, we provide some empirical results of the worst-case performance of CD with sGS and GBS update rules by solving unconstrained quadratic problems and and linearly constrained problems.



We consider solving a problem with a strongly convex quadratic objective and a linear constraint:
\begin{align}\label{eq: solve strongly convex}
    \min_{x \in \dR^n }  \quad   x^T Q x \quad
    \text{s.t.} \quad & Qx = 0, 
\end{align}
where $Q$ is defined by \cref{eq:Ac def}, in which $c=0.95$.

In \cref{tab:iterations for strongly convex ADMM for c equals 0.3}, we list the number of epochs needed for each algorithm to obtain relative error $1\mathrm{e}{-5}$. We observe that
with strongly convex objective, GBS-ADMM and sGS-ADMM still converge much slower than RP-ADMM or ALM. We also compare the ratios of the number of epochs of RP-ADMM and ALM over GBS-ADMM (and sGS-ADMM) in \cref{tab: ratios for strongly convex ADMM c = 0.3}. 
\rvs{From \cref{tab: ratios for strongly convex ADMM c = 0.3}, we observe that GBS-ADMM and sGS-ADMM are $\mathcal{O}(n)$ times slower than ALM in this example. However, they are not exactly $\mathcal{O}(n^2)$ times slower than RP-ADMM:
the gap is slightly smaller than $\mathcal{O}( n^2 )$.
We make the following conjecture:
\conjecture{In solving the problem \cref{eq: solve strongly convex}, GBS-ADMM and sGS-ADMM can be $\mathcal{O}(n^2 / \log n)$ times slower than RP-ADMM.}
}
\begin{rmk}
\rvs{ The $\log n $ term also appears in the literature which uses the same special matrix \cref{eq:Ac def} to demonstrate the worst-case performance of C-CD \cite{sun2016worst}. 
}
\end{rmk}

\begin{table}[ht]
  \centering
  \caption{Comparison of GBS-ADMM, sGS-ADMM, RP-ADMM, and ALM for solving \cref{eq: solve strongly convex} where $c = 0.3$. The numbers represent the number of epochs to achieve relative error $1\mathrm{e}{-5}$.}
    \begin{tabular}{|c|c|c|c|c|}
    \hline
    \textbf{n} & \textbf{GBS-ADMM} & \textbf{sGS-ADMM} & \textbf{RP-ADMM} & \textbf{ALM} \\
    \hline
    100   & 10709 & 13326 & 95   & 3696 \\
    \hline
   200   & 54932 & 71877 & 126  & 8607 \\
    \hline
    400   & 266344 & 391654 &  168  & 18044 \\
    \hline
    \end{tabular}%
  \label{tab:iterations for strongly convex ADMM for c equals 0.3}%
\end{table}

\begin{table}[H]
  \centering
  \caption{Ratios of the epochs of GBS-ADMM and sGS-ADMM over the epochs of RP-ADMM, and ALM for solving \cref{eq: solve strongly convex} when $c = 0.3$}
    \begin{tabular}{|c|c|c|c|c|}
    \hline
         & \multicolumn{2}{c|}{Ratio of GBS-ADMM} & \multicolumn{2}{c|}{Ratio of sGS-ADMM} \\
    \hline
    n     & RP-ADMM & ALM   & RP-ADMM & ALM \\
    \hline
   100   & 112.7 & 2.9   & 140.2 & 3.6 \\
    \hline
    200   & 436.0 & 6.4   & 570.5 & 8.4 \\
    \hline
    400   &1585.4  & 14.7  & 2331.3 & 21.7 \\
    \hline
    \end{tabular}%
  \label{tab: ratios for strongly convex ADMM c = 0.3}
\end{table}%

    \rvs{
    \begin{rmk}
One may wonder whether the gap can be even larger than $\mathcal{O}(n^2)$.
Based on our experiments of other problem instances, we suspect
that the gap is at most $\mathcal{O}(n^2 )$ or $\mathcal{O}(n^2/\log n)$.
In particular, we have performed two more experiments.
In the first experiment, 
we use the same special matrix $Q$ defined in \cref{eq:Ac def}
but vary the value of $c$. When $c$ is small, we observe that the gap between
    GBS-ADMM (or sGS-ADMM) and RP-ADMM becomes $\mathcal{O}(n)$,
which is much smaller than $\mathcal{O}(n^2/\log n)$.
In the second experiment,
we consider other problem data, such as randomly generated tridiagonal matrices and circulant Hankel matrices. 
In these cases, the performance of GBS-ADMM is comparable to RP-ADMM, and sGS-ADMM is the fastest one. Again, the gap between GBS-ADMM (or sGS-ADMM) and RP-ADMM 
is much smaller than $\mathcal{O}(n^2/ \log n)$.
See \cref{appendix: additional experiments} for the details
of these experiments. 
\end{rmk}
}

\section{Discussions and Conclusions}
\label{sec:conclusions}

\rvs{\subsection{Discussions of practical performance and multi-block ADMM.}



We remark that the worst-case slow convergence of an algorithm does not necessarily imply the slow convergence of an algorithm
in practice.
For instance, C-CD is shown to be up to $\mathcal{O}(n)$ times slower than GD in the worst case \cite{sun2016worst}, but 
\rvs{for many practical problems, C-CD is faster than GD \cite{sun2015improved, sun2016worst, li2017faster}}.
It is an interesting open problem to explain the practical behavior of C-CD. 
Similarly, the slow convergence of algorithms with sGS and GBS orders does not imply that they are slow for practical problems, but knowing the worst-case performance provides
better understanding of these algorithms.

As for ADMM,
our findings imply a significant gap: 
despite the extensive studies on ADMM, none of the existing
variants of multi-block ADMM can inherit the advantage of R-CD: an improvement ratio of $O(1) $ to $\mathcal{O}(n)$ in convergence speed
compared to the single-block method.
Before our paper, it seems that sGS-ADMM and GBS-ADMM are the closest to this goal, but our results provide strong evidence that
they are slow in the worst case. 
A remaining candidate for this goal is RP-ADMM, but this is a difficult task because even for RP-CD  the precise convergence speed remains unproved.
There are a few ways to fill in this theoretical gap: proposing a new method that achieves this goal,
or proposing a new framework to justify the advantage of deterministic ADMM (which would justify the advantage of
deterministic CD), or justifying the advantage of RP-ADMM. Each of these solutions would be rather non-trivial and quite
interesting. 

}

\subsection{Conclusions}

In this paper, we study the worst-case convergence rate of two symmeterized orders sGS and GBS for CD and ADMM.
For ADMM, these two update orders are among the most popular variants,
and also have strong convergence guarantee.
In this work, we prove that for unconstrained problems,
sGS-CD and GBS-CD are $\mathcal{O}(n^2)$ times slower than R-CD in the worst case. 
In addition, we show empirically that when solving quadratic problems with linear constraints, sGS-ADMM and GBS-ADMM can be roughly $\mathcal{O}(n^2)$
times slower than randomly permuted ADMM for a certain example.
These results indicate that the symmetrization trick does not resolve the slow worst-case
convergence speed of deterministic block-decomposition methods.
Technically, we provide a unified framework of symmetrization, which
includes sGS-CD and GBS-CD as special cases. This framework can help
understand algorithms from the perspective of update matrices.


\appendix
\newpage

\section{Proofs of the Two Upper Bounds}\label{app: proof of upp bounds}

\subsection{Proof of the Upper Bound  of  sGS-CD (\cref{prop: upper bound sGS-CD})}\label{subsec: prop: upper bound sGS-CD}

\begin{proof}


\rvs{We first assume $Q$ is positive definite. 
As shown in \cref{eq: sGS-CD update matrix for iterate}, the update matrix of sGS-CD  is
\[(I - \Gamma^{-T}Q)(I - \Gamma^{-1}Q).\]
}

By defining $x^{k+1/2} = (I - \Gamma^{-1}Q)x^k$ as the ``half step" update, we separate the effects of forward and backward pass. Using the techniques developed in \cite{sun2016worst}, we can obtain an upper bound on the decreases of objective error from $x^k$ to $x^{k+1/2}$ for forward pass (and from  $x^{k+1/2}$ to $x^{k+1}$ for backward pass). In particular, we can derive the following two inequalities from either optimization perspective or matrix recursion perspective, as indicated in \cite{sun2016worst}:

\begin{align}
    & \frac{f(x^k) - f(x^*)}{ f(x^k) - f(x^{k+\frac{1}{2}})}
      \leq  \| D_Q^{-1/2} \Gamma^T Q^{-1} \Gamma D_Q^{-1/2}\| \triangleq c_1 \label{eq: B_1 line 1} \\
    & \frac{f(x^{k+1}) - f(x^*)}{ f(x^{k+1}) - f(x^{k+\frac{1}{2}})}
      \leq  \| D_Q^{-1/2} U^T Q^{-1} U D_Q^{-1/2}\| \triangleq c_2 \label{eq: B_1 line 2}.
\end{align}

The proof of \cref{eq: B_1 line 1} can be found in the proof of Claim B.1 in \cite{sun2016worst}, and \cref{eq: B_1 line 2} can be proved in exactly the same way. \cref{eq: B_1 line 1} and \cref{eq: B_1 line 2} imply
\begin{align}
   & f\left(x^{k+\frac{1}{2}}\right)-f\left(x^{*}\right) \leq\left(1-\frac{1}{c_1}\right)\left(f\left(x^{k}\right)-f\left(x^{*}\right)\right) \label{eq: B_3},\\
   & f\left(x^{k+1}\right)-f\left(x^{*}\right) \leq\left(1-\frac{1}{c_2}\right)\left(f\left(x^{k+\frac{1}{2}}\right)-f\left(x^{*}\right)\right).\label{eq: B_4}
\end{align}
Combining \eqref{eq: B_3} and \eqref{eq: B_4}, we obtain
\begin{align}
    f(x^{k+1}) - f(x^*) \leq \left(1- \frac{1}{c_2}\right) \left(1- \frac{1}{c_1}\right) \left(f(x^k) - f(x^*)\right). \label{bound GBS}
\end{align}

Therefore, we can obtain an upper bound on the objective error convergence rate of GBS-CD by finding the upper bounds for $c_1$ and $c_2$. Notice that $Q$ is symmetric, and thus we observe that $c_1$ equals to $c_2$ from their definitions in \cref{eq: B_1 line 1} and \cref{eq: B_1 line 2}. 

According to the fact that $\left\|D_{Q}^{-1 / 2} B D_{Q}^{-1 / 2}\right\| \leq \frac{1}{\min _{i} Q_{ii}}\|B\|=\frac{1}{L_{\min }}\|B\|$ for any positive definite matrix $B$, where $L_{\min }  \triangleq \min_{i} Q_{ii}$, we have
\begin{align}
    c_1 = \left\|D_{Q}^{-1 / 2} \Gamma^{T} Q^{-1} \Gamma D_{Q}^{-1 / 2}\right\| \leq \frac{1}{L_{\min }}\left\|\Gamma^{T} Q^{-1} \Gamma\right\|. \label{bound c1}
\end{align}

We then apply \cref{claim B2}, which states that 
\begin{align}
\|\Gamma^T Q^{-1} \Gamma\| \leq \frac{1}{L_{\min} } \kappa \cdot \min \left\{   \sum_i L_i, (2 + \frac{1}{\pi} \log n)^2  L    \right\}. \label{bound A.1}
\end{align}

Finally, combining \eqref{bound GBS}, \eqref{bound c1}, \eqref{bound A.1}, the fact that $c_1 = c_2$, and replacing $\sum_{i} L_{i}$ by $n L_{\mathrm{avg}}$, we obtain the desired results in \cref{eq: SGSCD upper bound prop}:
\[
     f(x^{k+1}) - f^* \leq \left(\min \left\{ 1 - \frac{1}{ n \kappa  } \frac{L_{\min}}{L_{\avg}} ,  1 - \frac{L_{\min} }{ L (2 + \log n/ \pi)^2 } \frac{1}{ \kappa  } \right\}\right)^2  (f(x^k) - f^*).
 \]

If $Q$ is positive semi-definite, then we replace $Q^{-1}$ by $Q^{\dag}$ which is the pseudo-inverse of $Q$. 
The rest of the analysis is similar to the proof in \cite[Proposition 1]{sun2016worst} and omitted. 
\end{proof}

\subsection{Proof of Upper  bound  of  GBS-CD (\cref{prop: upper bound GBS-CD objective})}\label{subsec: prop: upper bound GBS-CD objective}

\begin{proof}
\rvs{
Without loss of generality, we can assume $x^*=0$.
Notice that minimizing $f(x) = x^T Q x - 2 b^T x$ is equivalent to minimizing $ f(x) = (x - x^*)^T Q (x - x^*) $
where $x^* = Q^{\dag}b $ by using the fact that $ Q x^* = Q Q^{\dag}b  = b $ when $b \in \mathcal{R}(Q)$.
By a linear transformation $z = x-x^*$,
minimizing $ (x - x^*)^T Q (x - x^*) $ starting from $x^0$
is equivalent to minimizing $ z^T Q z $ starting from $z^0 = x^0-x^*$.
Thus we can assume $x^* = 0$, or equivalently, $b = 0$.
}

We want to compute the convergence rate of $$ f(x^k) = (x^k)^T Q(x^k) = \| r^k \|^2,$$
where $r^k = A x^k $ in which $A$ satisfies $Q = A^T A$. 

Given $x^{k+1} = (I - B\Gamma ^{-1}Q)x^k$, we have:
\begin{align}\label{eq: GBS residual equations}
    r^{k+1} & = A x^{k+1} = A (I - B\Gamma ^{-1}Q)x^k \notag\\
            & = Ax^k - AB\Gamma ^{-1} A^T (A x^k) \quad  (\textrm{by } Q = A^TA) \notag\\
            & = (I - A B \Gamma ^{-1} A^T)r^k \notag
\end{align}

\rvs{
We observe that $A B \Gamma ^{-1} A^T$ is similar to $B \Gamma ^{-1} Q$. By \cref{prop: GBS-CD symmetrization}, we have
\begin{equation}
    \eig(I - B \Gamma ^{-1} Q) = \eig(I - \Gamma^{-1} Q \Gamma^{-T}).
\end{equation}

Therefore, to analyze $\rho(I - A B \Gamma ^{-1} A^T)$, it is sufficient to analyze $\rho(I - \Gamma^{-1} Q \Gamma^{-T})$.
}

\begin{align}
    & \rho(I - \Gamma^{-1} Q \Gamma^{-T}) 
    =  1 - \lambda_{\min}(\Gamma^{-1} Q \Gamma^{-T}) 
    =  1 - \frac{1}{\rho(\Gamma^{T} Q^{-1} \Gamma)} \notag\\
    = & 1 - \frac{1}{\|\Gamma^{T} Q^{-1} \Gamma\|} 
    \leq 1 - \frac{1}{\kappa \cdot \min \left\{   \sum_i L_i, (2 + \frac{1}{\pi} \log n)^2  L    \right\}} \notag.
\end{align}
These lines follow from the fact that the spectral radius and the spectral norm of a symmetric matrix are the same. The last inequality is obtained by applying the upper bound of $\|\Gamma^T Q^{-1} \Gamma\|$ from \cref{claim B2}. This implies
\begin{align}
    \|r^{k+1}\| & \leq 
    \left(1 - 
    \frac{1}{\kappa \cdot \min \left\{  \sum_i L_i, (2 + \frac{1}{\pi} \log (n))^2  L    \right\}} 
    \right)
    \|r^{k}\|, \notag \\
    f(x^{k+1}) - f^* & \leq 
    \left(1 - 
    \frac{1}{\kappa \cdot \min \left\{  \sum_i L_i, (2 + \frac{1}{\pi} \log (n))^2  L    \right\}} 
    \right)^2
    (f(x^{k}) - f^*) \notag.
\end{align}
\end{proof}


\section{An Alternating Proof of the Lower Bound of sGS-CD}\label{sec: sGS projection proof}

In this proof, we express sGS-CD as alternating projections, then the iteration matrix of sGS-CD becomes a symmetrization of the C-CD iteration matrix. This builds a link with C-CD method and makes the computation of spectral radius feasible.

\begin{proof}
We need to lower bound the objective error convergence rate of sGS-CD, using example \cref{eq:Ac problem} introduced in \cref{subsec: worst case}. In particular, we want to prove a lower bound of the convergence rate of $ f(x^k) = (x^k)^T Qx^k = \| r^k \|^2 $ where $r^k = A x^k$ and $A$ satisfies $A^T A = Q$. 
    We introduce alternating projections and use them to find a simpler form of the iteration matrix of $r^k$.
    
When solving the \cref{eq:Ac problem} using coordinate descent, the update rule for coordinate $i$ is given by
\begin{align}
    x_{i}^{+}=\frac{1}{A_{i}^{T} A_{i}}\left[A_{i}^{T}\left(-A_{-i} x_{-i}\right)\right], \label{eq:cd update rule has plus}
\end{align}
where $A_{-i}$ contains all columns of $A$ except $A_i$, $x_{-i}$ contains all elements of $x$ except $x_i$ and represents the current values, and $x_{i}^{+}$ represents the new value. 
    
Since $r = Ax$, we can rewrite \cref{eq:cd update rule has plus} as
    \begin{subequations}\label{eq:update rule proj 1}
    \begin{align*} 
    x_{i}^{+} &=x_{i}-\frac{1}{A_{i}^{T} A_{i}} A_{i}^{T} r \\ 
    r^{+} &=r+A_{i}\left(x_{i}^{+}-x_{i}\right).
    \end{align*}
    \end{subequations}
    
We define $P_{i}=I-(A_{i}^{T} A_{i})^{-1} A_{i} A_{i}^{T}$ as the projection matrix that projects vectors onto the column space of $A_i$. 
The update rule for $r$ is therefore
    \begin{align*}
        r^{+}& =\left(I-\frac{A_{i} A_{i}^{T}}{A_{i}^{T} A_{i}}\right) r=P_{i} r \label{eq:CD update rule in projection}.
    \end{align*}
We denote $r^{k + \frac{1}{2}}$ to be the value of $r$ after the forward pass of sGS-CD at iteration $k+1$, and it can be expressed as
    \begin{align}
         r^{k + \frac{1}{2}} = P_{n} P_{n-1} \ldots P_{1} r^{k } \notag
    \end{align}
    
Similarly, $r^{k + 1}$, which is the value of $r$ after the backward pass of sGS-CD at iteration $k+1$, can be expressed as
    \begin{align}
         r^{k+1} = P_{1} P_{2} \ldots P_{n-1} r^{k + \frac{1}{2}}. \notag
    \end{align}
    
Combining the forward pass and backward pass, we have matrix recursion for $r^{k+1}$ of sGS-CD:
    \begin{equation}\label{eq: sGS-CD residual update rule}
         r^{k+1} = P_{1} P_{2} \ldots P_{n-1} P_{n} P_{n-1} \ldots P_{1} r^{k }
    \end{equation}

Using the property of projection matrix $P_i = P_i P_i$, \cref{eq: sGS-CD residual update rule} can be rewritten as
    \begin{align}\label{eq:rewrite r using symmetric}
        r^{k+1} = \left(P_n P_{n-1} \cdots  P_1\right)^T \left( P_n P_n P_{n-1} \cdots  P_1 \right)r^{k }.
    \end{align}
    
Note that a forward pass of sGS-CD is equivalent to one full iteration of C-CD. If we denote $\hat{P} = P_n P_{n-1} \cdots  P_1$, then $\hat{r}^{k+1}$ of C-CD at iteration $k+1$ can be expressed as
    \begin{align*}\label{eq: C-CD residual update}
         \hat{r}^{k+1} = P_n P_{n-1} \cdots  P_1 \hat{r}^{k} = \hat{P} \hat{r}^{k}. 
    \end{align*}

The update of $r^{k+1}$ for sGS-CD at iteration $k+1$ is
    \begin{equation}\label{eq: relation: square in sGS-CD residual}
        r^{k+1} = \hat{P}^T \hat{P} r^k.
    \end{equation}
In other words, \cref{eq: relation: square in sGS-CD residual} shows that the iteration matrix of $r^k$ for sGS-CD is a \textbf{``product symmetrization''} of the iteration matrix of $\hat{r}^{k}$ for C-CD. This implies we can apply results of the lower bound of C-CD and the rest of the proof is the same as the one in \cref{subsec: proof sGS lower bound}.

\end{proof}

\section{Intermediate Results in the Proof of the Upper Bounds}

\subsection{Proposition of $ \|  \Gamma^T A^{-1} \Gamma \|$}
\label{appendix: proof of tau A tau}
\begin{proposition}[Claim B.2 in \cite{sun2016worst}]\label{claim B2}
Let $A$ be a positive definite matrix with condition number $\kappa$ and $\Gamma$ is the lower triangular matrix of $A$. Then
    \begin{equation}\label{bound G'AinvG}
    \|  \Gamma^T A^{-1} \Gamma \| \leq
     \kappa \cdot \min \left\{   \sum_i L_i, (2 + \frac{1}{\pi} \log n)^2  L    \right\}.
  \end{equation}
\end{proposition}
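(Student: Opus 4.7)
The plan is to reduce the claim to an upper bound on $\|\Gamma\|$ via submultiplicativity of the spectral norm. Since $\|\Gamma^T\|=\|\Gamma\|$ and $\|A^{-1}\| = 1/\lambda_{\min}(A)$, we have
\[
\|\Gamma^T A^{-1} \Gamma\| \;\le\; \frac{\|\Gamma\|^2}{\lambda_{\min}(A)}.
\]
Using $\kappa=\lambda_{\max}(A)/\lambda_{\min}(A)$ and $L=\lambda_{\max}(A)$, it therefore suffices to prove the two independent bounds $\|\Gamma\|^2 \le L\sum_i L_i$ and $\|\Gamma\|^2 \le (2+\tfrac{1}{\pi}\log n)^2 L^2$, and then take the minimum.

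For the first estimate, I would pass to the Frobenius norm: $\|\Gamma\|^2\le\|\Gamma\|_F^2 = \sum_{i\ge j} A_{ij}^2$. Enlarging the sum to all $(i,j)$ and using that $A$ is symmetric gives $\|\Gamma\|_F^2 \le \sum_{i,j}A_{ij}^2 = \mathrm{tr}(A^2)$. Positive semidefiniteness then yields $\mathrm{tr}(A^2)=\sum_i \lambda_i(A)^2 \le \lambda_{\max}(A)\,\mathrm{tr}(A) = L\sum_i L_i$, which is the first inequality. This step is elementary and only uses that $A\succeq 0$.

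For the second estimate I would invoke the classical triangular truncation inequality: the lower-triangular projection on the space of $n\times n$ matrices, equipped with the spectral norm, has operator norm at most $2+\tfrac{1}{\pi}\log n$. This is Kwapie\'n--Pe{\l}czy\'nski's theorem with the sharp asymptotic constant $(\log n)/\pi$, plus a small additive constant chosen to make the bound hold for all $n$. Applied to the symmetric matrix $A$, it yields $\|\Gamma\| \le (2+\tfrac{1}{\pi}\log n)\|A\| = (2+\tfrac{1}{\pi}\log n)L$; squaring produces the second estimate. Combining the two cases with the opening reduction delivers the claim $\|\Gamma^T A^{-1}\Gamma\| \le \kappa\cdot\min\{\sum_i L_i,(2+\tfrac{1}{\pi}\log n)^2 L\}$.

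The main obstacle is the triangular truncation bound with the stated constant $2+\tfrac{1}{\pi}\log n$; different references state slightly different constants (some give $O(\log n)$ only, others the sharp asymptotic). Since this proposition is already attributed to \cite{sun2016worst}, in a self-contained exposition I would cite the relevant operator-theoretic result for the precise constant rather than reproduce that argument. Everything else in the proof—the submultiplicativity reduction, the Frobenius estimate, and the PSD trace inequality—is routine.
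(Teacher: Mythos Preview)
Your proposal is correct and follows essentially the same route as the paper: reduce via submultiplicativity to $\|\Gamma^T A^{-1}\Gamma\|\le \|\Gamma\|^2/\lambda_{\min}(A)$, then bound $\|\Gamma\|^2$ once by the Frobenius norm and once by the triangular truncation operator norm (the paper cites \cite{angelos1992triangular} for the constant $2+\tfrac{1}{\pi}\log n$). The only cosmetic difference is that the paper computes $\|\Gamma\|_F^2=\tfrac12(\|A\|_F^2+\sum_i A_{ii}^2)$ exactly and bounds each piece, whereas you simply enlarge $\sum_{i\ge j}A_{ij}^2$ to $\mathrm{tr}(A^2)$; both arrive at $\|\Gamma\|^2\le L\sum_i L_i$.
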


\begin{proof}

 Denote $$ \Gamma_{\mathrm{unit} } = \begin{bmatrix}
   1 &  0 & 0 & \dots & 0  \\
   1 &  1 & 0 & \dots & 0  \\
   \vdots & \vdots & \vdots & \ddots & \vdots  \\
   1 &  1 & 1 & \dots & 0  \\
   1 &  1 & 1 & \dots & 1  \\
     \end{bmatrix}  , $$
     then $ \Gamma = \Gamma_{\mathrm{unit} } \circ A$, where $\circ $ denotes the Hadamard product.
According to the classical result on the operator norm of the triangular truncation operator
\cite[Theorem 1]{angelos1992triangular}, we have
$$
  \| \Gamma  \| = \| \Gamma_{\mathrm{unit}} \circ A \| \leq ( 1 + \frac{1}{\pi} + \frac{1}{\pi} \log n  ) \| A \|
   \leq (2 + \frac{1}{\pi} \log n) \| A \|.
$$
Thus we can prove the second part of \eqref{bound G'AinvG} by
\begin{align}
   &  \|  \Gamma^T A^{-1} \Gamma \|
 \leq \| \Gamma^T \Gamma \| \| A^{-1} \|  = \| \Gamma \|^2 \frac{1}{\lambda_{\min}(A) } \\
  \leq  & (2 + \frac{1}{\pi} \log n)^2 \frac{\| A\|^2}{\lambda_{\min}(A)}
  = (2 + \frac{1}{\pi} \log n)^2 \kappa L . 
\end{align}

We can bound $ \| \Gamma \|^2 $ in another way (denote $\lambda_i$'s as the eigenvalues of $A$):
\begin{equation}\label{Gamma square bound, another}
\begin{split}
  \| \Gamma \|^2 \leq \|\Gamma \|_F^2 =  \frac{1}{2}( \|A \|_F^2 + \sum_i A_{ii}^2 )
= \frac{1}{2}  \left( \sum_i \lambda_i^2 +  \sum_i A_{ii}^2  \right)   \\
                 \leq \frac{1}{2} \left( (\sum_i \lambda_i)\lambda_{\max} + L_{\max} \sum_i A_{ii}    \right)
                  \overset{\text{(i)}}{=} \frac{1}{2} ( L + L_{\max} )\sum_i L_i \leq L \sum_i L_i.
       \end{split}
\end{equation}
where(i) is because
$ \sum_i \lambda_i = \text{tr}(A) = \sum_i A_{ii} $ and $A_{ii} = L_i $.
Thus
$$
  \|  \Gamma^T A^{-1} \Gamma \|
 \leq  \| \Gamma \|^2 \frac{1}{\lambda_{\min}(A) } \overset{\eqref{Gamma square bound, another}}{\leq} \frac{L}{\lambda_{\min}} \sum_i L_i = \kappa \sum_i L_i.
$$
which proves the first part of \eqref{bound G'AinvG}.
\end{proof}

\subsection{Proof of Lemma for Equal Eigenvalues}\label{lm with proof: equal eigenvalues}



\begin{proof} 
Denote $\text{eig}(Z)$ as the set of eigenvalues of $Z$ (allow
repeated elements; e.g. if $Z$ has eigenvalues $1.7$ with multiplicity $2$, then 
we define $\text{eig}(Z) = \{ 1.7, 1.7 \} $). We first notice the following simple fact.

    \textbf{Fact}: For any square matrix $M$ with the following block structure:
    \begin{equation}\label{eq: upper trig matrix M}
        M = \begin{bmatrix}
            M_{11} & M_{12} \\
            \mathbf{0}_{n-1:n-1} & M_{22}
        \end{bmatrix},
    \end{equation}
    whrere $M_{11}, M_{12}$ and $M_{22}$ are submatrices of $M$ with appropriate shapes, $\mathbf{0}_{n-1:n-1}$ is an $n-1$ by $n-1$ zero matrix, 
    $  \text{eig}(M) = \text{eig}(M_{11}) \cup  \text{eig}(M_{22}) . $
    
The fact is simple to prove: the characteristic polynomial of $M$ is $\det(\lambda I - M) = \det( \lambda I -  M_{11} ) \det( \lambda I -  M_{22} )$. Since $ \det(\lambda I - Z)  = \Pi_{\mu \in \text{eig}(Z)} (\lambda - \mu ) $, we have $  \text{eig}(M) = \text{eig}(M_{11}) \cup  \text{eig}(M_{22}) . $

    Since $\Gamma$ is the lower triangular part of $Q$, then the first column of $Q^{-1} \Gamma$ is equal to the first column of the identity matrix, i.e. a column vector in the form: $(1, 0, \cdots, 0)^T$. 
    $Q^{-1} \Gamma$ can be represented in terms of its submatrices
    where $\mathbf{0}$ represents a column vector of all zeros in length of $n-1$ and write $B^{-1}$ and $\Gamma^T$ into block matrices:
    \begin{equation} \label{submatrix}
        Q^{-1} \Gamma = 
        \begin{bmatrix}
            1 & \mathbf{b_{12}} \\
            \mathbf{0} & \mathbf{b_{22}}
        \end{bmatrix}, \quad
        B^{-1} = \begin{bmatrix}
            1 & \mathbf{0}^T \\
            \mathbf{0} & \Gamma^T_{2:n}
        \end{bmatrix}, \quad
        \Gamma^T = \begin{bmatrix}
            1 & \mathbf{\Gamma^T_{1, 2:n}} \\
            \mathbf{0} & \Gamma^T_{2:n}
        \end{bmatrix},
    \end{equation}
    where $\mathbf{0}^T$ is a row vector of all zeros in length of $n-1$. $\mathbf{\Gamma^T_{1, 2:n}}$ is a row vector of length $n-1$ and its entries are the last $n-1$'s entries of the first row of $\Gamma^T$. $\Gamma^T_{2:n}$ is the submatrix of $\Gamma^T$ which does not contain the first row and the last row of $\Gamma^T$. Note that we assume $Q_{11}=1$, so the upper left block of $\Gamma^T$ is $1$. 

    Using \cref{submatrix}, we compute $B^{-1}Q^{-1} \Gamma$ and $\Gamma^T Q^{-1} \Gamma$ in the form \cref{eq: upper trig matrix M}.
        \begin{align}
        B^{-1}Q^{-1} \Gamma &  = 
        \begin{bmatrix}
            1 & \mathbf{0}^T \\
            \mathbf{0} & \Gamma^T_{2:n}
        \end{bmatrix}
        \begin{bmatrix}
            1 & \mathbf{b_{12}} \\
            \mathbf{0} & \mathbf{b_{22}}
        \end{bmatrix}
        = 
        \begin{bmatrix}
            1 & \mathbf{b_{12}} \\
            \mathbf{0} & \Gamma^T_{2:n} \mathbf{b_{22}}
        \end{bmatrix} \\
        \Gamma^T Q^{-1} \Gamma & = 
        \begin{bmatrix}
            1 & \mathbf{\Gamma^T_{1, 2:n}}  \\
            \mathbf{0} & \Gamma^T_{2:n}
        \end{bmatrix}
        \begin{bmatrix}
            1 & \mathbf{b_{12}} \\
            \mathbf{0} & \mathbf{b_{22}}
        \end{bmatrix}
        = 
        \begin{bmatrix}
            1 & \mathbf{b_{12}} + \mathbf{\Gamma^T_{1, 2:n}}   \mathbf{b_{22}}\\
            \mathbf{0} & \Gamma^T_{2:n} \mathbf{b_{22}}
        \end{bmatrix}
    \end{align}
    
     Therefore, the diagonal blocks of $B^{-1}Q^{-1}\Gamma$ and $\Gamma^T Q^{-1} \Gamma$ are the same, and thus, by the fact we introduced at the beginning of the proof, the eigenvalues of $B^{-1}Q^{-1} \Gamma$ and $\Gamma^T Q^{-1} \Gamma$ are the same. 
\end{proof}

\section{Intermediate Results in the Proof of the Lower Bounds}
\label{sec: sup proofs for GBS lower bound}

\subsection{Proof of \cref{lm: decompose Q inverse}}

\begin{proof}
Denote $\tilde{c} = 1-c$ and $J = ee^T$, where $e$ is a vector with $1$ on every entry. Observe that $Q = c J  + \tilde{c} I$ and
recall that by Sherman-Morrison formula \cite{Sherman1950}, if a matrix is in the form of $W + u v^T$, where $u, v$ are vectors and the matrix $W$ is nonsingular, then the inverse of the matrix $W +  u v^T $ is
\begin{align}
    (W +  u v^T)^{-1} = W^{-1} - \frac{W^{-1}uv^TW^{-1}}{1 - v^T W^{-1}u}.
\end{align}

Since $Q$ is invertible and $I = e e^T$, then we can apply the Sherman-Morrison formula to obtain the inverse of Q.

\begin{equation}
    Q^{-1} = (c J  + \tilde{c} I )^{-1}
    = \frac{1}{\tilde{c}} I - \frac{c}{\tilde{c} (\tilde{c}+ cn)} J.
\end{equation}
\end{proof}

\subsection{Proof of \cref{prop: a few props in GBS proof}}

\begin{proof}
\textbf{Part 1.} By assumption of $Q$ from \cref{eq:Ac def}, we know $Q$ is a positive definite matrix. With simple calculations, we know $Q$ has one eigenvalue $1-c$ with multiplicity $n-1$ and one
eigenvalue $1-c + cn $ with multiplicity $1$. Hence, $\|Q\| = 1-c + cn$.


\textbf{Part 2.}
We observe that
\begin{equation}\label{eq: rewrite Lte}
    \Gamma^T e   = 
    \begin{bmatrix}
    1 & c & \cdots & c & c\\
    0 & 1 & \cdots & c & c \\
    \vdots & \vdots & \ddots & \vdots& \vdots\\
    0 & 0 & \cdots & 1 &c \\
    0 & 0 & \cdots & 0 & 1\\
    \end{bmatrix}
    \begin{bmatrix}
       1 \\
       1 \\
        \vdots \\
       1 \\
       1
    \end{bmatrix}
     = 
    \begin{bmatrix}
        1 + c(n-1) \\
        1 + c(n-2)\\
        \vdots\\
        1 + c\\
        1
    \end{bmatrix}
\end{equation}
Based on the observation \cref{eq: rewrite Lte}, 
\begin{equation*}
\quad \|\Gamma^TJ\Gamma\|  = \|\Gamma^T e\|^2 = \sum_{i=1}^n (1 + c(n-i))^2 = n + cn(n-1) + \frac{c^2}{6} n(n-1)(2n-1).
\end{equation*}
%

\textbf{Part 3.}
\begin{proposition}[Proposition 3.2 in \cite{Sra2014}]\label{prop:suvrit}
    Let $M=\left[m_{ij}\right]=[\min (i, j)]$, define $\theta_{k} :=\frac{2 k \pi}{2 n+1}$. Then $\|M\| = \left(2+2 \cos \theta_{n}\right)^{-1} \gtrapprox 4 n^{2} / \pi^{2}$.
\end{proposition}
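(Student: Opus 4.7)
The plan is to compute $\|M\|$ via its inverse, which turns out to have a very clean tridiagonal structure, and then diagonalize that tridiagonal matrix by sine-type Ansatz. Since $M$ is symmetric positive definite, $\|M\| = 1/\lambda_{\min}(M^{-1})$, so it suffices to find the smallest eigenvalue of $M^{-1}$.

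First I would verify by direct multiplication that the inverse of $M=[\min(i,j)]$ is the tridiagonal matrix
\begin{equation*}
M^{-1} = \begin{bmatrix} 2 & -1 & & & \\ -1 & 2 & -1 & & \\ & \ddots & \ddots & \ddots & \\ & & -1 & 2 & -1 \\ & & & -1 & 1 \end{bmatrix},
\end{equation*}
i.e.\ the standard discrete second-difference operator with a modified last diagonal entry. This boundary modification (a $1$ instead of a $2$ in position $(n,n)$) is the only departure from the classical Dirichlet Laplacian and is what produces the $2n+1$ in the denominator of $\theta_k$.

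Next I would look for eigenvectors of the form $v_j = \sin(j\theta)$. The interior rows of $M^{-1} v = \lambda v$ give
\begin{equation*}
-\sin((j-1)\theta) + 2\sin(j\theta) - \sin((j+1)\theta) = (2 - 2\cos\theta)\sin(j\theta),
\end{equation*}
so $\lambda = 2-2\cos\theta$ is forced. The left boundary $v_0 = 0$ is automatic. The modified last row yields the condition
\begin{equation*}
-\sin((n-1)\theta) + \sin(n\theta) = (2-2\cos\theta)\sin(n\theta),
\end{equation*}
which simplifies (using $2\cos\theta\sin(n\theta) = \sin((n+1)\theta)+\sin((n-1)\theta)$) to $\sin((n+1)\theta) = \sin(n\theta)$, i.e.\ $(2n+1)\theta \in \pi + 2\pi\mathbb{Z}$. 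Thus the eigenvalues are $\lambda_k = 2 - 2\cos((2k+1)\pi/(2n+1))$ for $k=0,\ldots,n-1$, with the minimum attained at $k=0$, giving $\lambda_{\min}(M^{-1}) = 2 - 2\cos(\pi/(2n+1))$. Rewriting $\pi/(2n+1) = \pi - \theta_n$ identifies this with $2 + 2\cos\theta_n$, yielding the claimed closed form $\|M\| = (2+2\cos\theta_n)^{-1}$.

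Finally, for the asymptotic bound I would use the elementary inequality $1-\cos x \le x^2/2$ on $(0,\pi)$, so that $2 - 2\cos(\pi/(2n+1)) \le \pi^2/(2n+1)^2$, and therefore
\begin{equation*}
\|M\| \;=\; \frac{1}{2-2\cos(\pi/(2n+1))} \;\ge\; \frac{(2n+1)^2}{\pi^2} \;\ge\; \frac{4n^2}{\pi^2}.
\end{equation*}
No step here is delicate; the one place that requires a little care is the boundary condition computation, since a sign error or misapplication of the sum-to-product identity would produce $\theta_k$ with denominator $2n$ or $n+1$ rather than $2n+1$, giving the wrong leading constant. Beyond that the proof is essentially a textbook sine-diagonalization of a tridiagonal matrix.
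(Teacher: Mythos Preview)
Your proof is correct. The paper does not supply its own proof of this proposition; it simply quotes the result from \cite{Sra2014} and applies it as a black box in the proof of \cref{prop: a few props in GBS proof}. Your argument---inverting $M$ to the tridiagonal discrete Laplacian with the modified corner entry, diagonalizing via the sine ansatz to obtain the boundary condition $(2n+1)\theta\in\pi+2\pi\mathbb{Z}$, and then reading off $\lambda_{\min}(M^{-1})=2-2\cos(\pi/(2n+1))=2+2\cos\theta_n$---is the standard route and is entirely self-contained, which is more than the paper provides. The asymptotic step $1-\cos x\le x^2/2$ giving $\|M\|\ge(2n+1)^2/\pi^2\ge 4n^2/\pi^2$ is also clean and in fact yields a genuine inequality rather than just the informal $\gtrapprox$ stated in the proposition.
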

We observe that $\Gamma\Gamma^T - Q$  equals to  $c^2 M$ where each entry $M_{ij}$ is defined as $j - 1$ when $j \leq i$ and otherwise is $i - 1$.
Observe that the submatrix $M_{2:n}$ is a special matrix which satisfies the definition $M_{ij} = \min(i, j)$, so we can apply \cref{prop:suvrit}, which is proved in  \cite{Sra2014}, to approximate the spectral norm of $\Gamma\Gamma^T - Q$. Since the entries on the first row and the first column of $M$ are all zeros, then 
$$\|\Gamma\Gamma^T - Q\| = c^2 \|M\| = c^2 \|M_{2:n}\| \gtrapprox c^2  \frac{4 n^{2}}{\pi^{2}}.$$ 

\end{proof}

\section{Additional Experiments for ADMM Algorithms}\label{appendix: additional experiments}

\rvs{In \cref{sec:experiments ADMM}, we already illustrate that GBS-ADMM and sGS-ADMM have much worse performance than RP-ADMM in solving a specific problem \cref{eq: solve strongly convex} with a fairly large $c = 0.95$. Now we consider the problem with a smaller value of $c = 0.1$. As shown in \cref{tab: ratios for strongly convex ADMM true c = 0.1}, the the gap between GBS-ADMM and RP-ADMM is reduced to $\mathcal{O}(n)$, but the gap between sGS-ADMM and RP-ADMM remains as large as shown in \cref{tab:iterations for strongly convex ADMM for c equals 0.3}.}

\begin{table}[H]
  \centering
  \caption{Comparison of GBS-ADMM, sGS-ADMM, RP-ADMM for solving \cref{eq: solve strongly convex} where $c = 0.1$. The numbers in the left three columns represent the number of epochs to achieve relative error $1\mathrm{e}{-5}$. 
  The other columns represent the ratios of the epochs of GBS-ADMM and sGS-ADMM over the epochs of RP-ADMM.}
\begin{tabular}{|c|c|c|c|c|c|}
    \hline
         & \multicolumn{3}{c|}{Epochs} & \multicolumn{2}{c|}{Ratios} \\
    \hline
    n     & GBS-ADMM & sGS-ADMM  & RP-ADMM & GBS-ADMM & sGS-ADMM \\
    \hline
    100   & 98 & 103   & 21 & 4.9 & 5.1 \\
    \hline
    200   & 202 & 276   & 19 & 10.6 & 14.5 \\
    \hline
    400   & 382  & 757  & 18 & 21.2 & 42.0 \\
    \hline
    \end{tabular}%
    \label{tab: ratios for strongly convex ADMM true c = 0.1}
\end{table}

\rvs{One of the main contribution of this paper is to provide an example and show the slow convergence of GBS-ADMM and sGS-ADMM when compared to RP-ADMM in solving the example. 
To provide a complete comparison, we also add examples which show GBS-ADMM and sGS-ADMM could converge as fast as (or even better than) RP-ADMM. }
\rvs{In particular, we consider different problem instances such as randomly generated tridiagonal matrices and circulant Hankel matrices. 
Circulant Hankel matrix is generated with independent standard Gaussian entries. 
More specifically, we generate $\delta_1, \delta_2, \dots, \delta_{N} \sim \mathcal{N}(0,1) $ and let the entries of a circulant Hankel matrix denote as $A_{i,j} = \delta_{i+j-1}$ (define $\delta_{k} = \delta_{k-n}$ if $k>n$).
In the experiment, we choose the diagonal entry of our tridiagonal matrices to be one and generate each off-diagonal entry from a standard normal distribution.
}
\rvs{As shown in \cref{tab: iterations for strongly convex ADMM for other matrices}, the performance of GBS-ADMM is comparable to RP-ADMM, and sGS-ADMM are the fastest among the others. }

\begin{table}[H]
  \centering
  \caption{Comparison of GBS-ADMM, sGS-ADMM, RP-ADMM for solving \cref{eq: solve strongly convex} where $Q = A^T A$ where $A$ is circulant Hankel matrix or randomly generated tridiagonal matrix. The numbers in the last three columns represent the number of epochs to achieve relative error $1\mathrm{e}{-5}$.}
\begin{tabular}{|c|c|c|c|}
    \hline
         & \multicolumn{3}{c|}{Circulant Hankel Matrix} \\
    \hline
    n     & GBS-ADMM & sGS-ADMM  & RP-ADMM\\
    \hline
    25   & 28600 & 28473   & 27282 \\
    \hline
    50   & 136130 & 24816   & 279340 \\
    \hline
    100   & 715607  & 66105  & 752492 \\
    \hline
         & \multicolumn{3}{c|}{Tridiagonal Matrix} \\
    \hline
    n     & GBS-ADMM & sGS-ADMM  & RP-ADMM\\
    \hline
    25   & 8145 & 1630 &  5169 \\
    \hline
    50   & 6340 & 4267 &  35092 \\
    \hline
    100   & 43219 & 14645 & 26649 \\
    \hline
\end{tabular}%
\label{tab: iterations for strongly convex ADMM for other matrices}
\end{table}


\bibliographystyle{ieeetr}
\bibliography{main}

\begin{thebibliography}{10}

\bibitem{wright2015coordinate}
S.~J. Wright, ``Coordinate descent algorithms,'' {\em Mathematical
  Programming}, vol.~151, no.~1, pp.~3--34, 2015.

\bibitem{friedman2010regularization}
J.~Friedman, T.~Hastie, and R.~Tibshirani, ``Regularization paths for
  generalized linear models via coordinate descent,'' {\em Journal of
  statistical software}, vol.~33, no.~1, p.~1, 2010.

\bibitem{platt1999fast}
J.~Platt, ``Fast training of support vector machines using sequential minimal
  optimization. advances in kernel methods—support vector learning (pp.
  185--208),'' {\em AJ, MIT Press, Cambridge, MA}, 1999.

\bibitem{hsieh2008dual}
C.-J. Hsieh, K.-W. Chang, C.-J. Lin, S.~S. Keerthi, and S.~Sundararajan, ``A
  dual coordinate descent method for large-scale linear svm,'' in {\em
  Proceedings of the 25th international conference on Machine learning},
  pp.~408--415, 2008.

\bibitem{chang2011libsvm}
C.-C. Chang and C.-J. Lin, ``Libsvm: A library for support vector machines,''
  {\em ACM transactions on intelligent systems and technology (TIST)}, vol.~2,
  no.~3, pp.~1--27, 2011.

\bibitem{kolda2009tensor}
T.~G. Kolda and B.~W. Bader, ``Tensor decompositions and applications,'' {\em
  SIAM review}, vol.~51, no.~3, pp.~455--500, 2009.

\bibitem{shi2011iteratively}
Q.~Shi, M.~Razaviyayn, Z.-Q. Luo, and C.~He, ``An iteratively weighted mmse
  approach to distributed sum-utility maximization for a mimo interfering
  broadcast channel,'' {\em IEEE Transactions on Signal Processing}, vol.~59,
  no.~9, pp.~4331--4340, 2011.

\bibitem{shalev2013stochastic}
S.~Shalev-Shwartz and T.~Zhang, ``Stochastic dual coordinate ascent methods for
  regularized loss minimization,'' {\em Journal of Machine Learning Research},
  vol.~14, no.~Feb, pp.~567--599, 2013.

\bibitem{yen2015sparse}
I.~E.-H. Yen, K.~Zhong, C.-J. Hsieh, P.~K. Ravikumar, and I.~S. Dhillon,
  ``Sparse linear programming via primal and dual augmented coordinate
  descent,'' in {\em Advances in Neural Information Processing Systems},
  pp.~2368--2376, 2015.

\bibitem{chang2008coordinate}
K.-W. Chang, C.-J. Hsieh, and C.-J. Lin, ``Coordinate descent method for
  large-scale l2-loss linear support vector machines,'' {\em Journal of Machine
  Learning Research}, vol.~9, no.~Jul, pp.~1369--1398, 2008.

\bibitem{powell1973search}
M.~J. Powell, ``On search directions for minimization algorithms,'' {\em
  Mathematical programming}, vol.~4, no.~1, pp.~193--201, 1973.

\bibitem{gordon2015coordinate}
G.~Gordon and R.~Tibshirani, ``Coordinate descent,'' {\em Optimization},
  vol.~10, no.~36, p.~725, 2015.

\bibitem{sun2016worst}
R.~Sun and Y.~Ye, ``Worst-case complexity of cyclic coordinate descent: $ {O}
  (n^{2}) $ gap with randomized version,'' {\em arXiv preprint
  arXiv:1604.07130}, 2016.

\bibitem{leventhal2010randomized}
D.~Leventhal and A.~S. Lewis, ``Randomized methods for linear constraints:
  convergence rates and conditioning,'' {\em Mathematics of Operations
  Research}, vol.~35, no.~3, pp.~641--654, 2010.

\bibitem{nesterov2012efficiency}
Y.~Nesterov, ``Efficiency of coordinate descent methods on huge-scale
  optimization problems,'' {\em SIAM Journal on Optimization}, vol.~22, no.~2,
  pp.~341--362, 2012.

\bibitem{richtarik2014iteration}
P.~Richt{\'a}rik and M.~Tak{\'a}{\v{c}}, ``Iteration complexity of randomized
  block-coordinate descent methods for minimizing a composite function,'' {\em
  Mathematical Programming}, vol.~144, no.~1-2, pp.~1--38, 2014.

\bibitem{lu2015complexity}
Z.~Lu and L.~Xiao, ``On the complexity analysis of randomized block-coordinate
  descent methods,'' {\em Mathematical Programming}, vol.~152, no.~1-2,
  pp.~615--642, 2015.

\bibitem{zheng5873randomized}
Q.~Zheng, P.~Richtarik, and T.~Zhang, ``Randomized dual coordinate ascent with
  arbitrary sampling,'' {\em arXiv preprint arXiv:1411.5873}, 2015.

\bibitem{lin2015accelerated}
Q.~Lin, Z.~Lu, and L.~Xiao, ``An accelerated randomized proximal coordinate
  gradient method and its application to regularized empirical risk
  minimization,'' {\em SIAM Journal on Optimization}, vol.~25, no.~4,
  pp.~2244--2273, 2015.

\bibitem{zhang2017stochastic}
Y.~Zhang and L.~Xiao, ``Stochastic primal-dual coordinate method for
  regularized empirical risk minimization,'' {\em The Journal of Machine
  Learning Research}, vol.~18, no.~1, pp.~2939--2980, 2017.

\bibitem{fercoq2015accelerated}
O.~Fercoq and P.~Richt{\'a}rik, ``Accelerated, parallel, and proximal
  coordinate descent,'' {\em SIAM Journal on Optimization}, vol.~25, no.~4,
  pp.~1997--2023, 2015.

\bibitem{liu2015asynchronous}
J.~Liu, S.~J. Wright, C.~R{\'e}, V.~Bittorf, and S.~Sridhar, ``An asynchronous
  parallel stochastic coordinate descent algorithm,'' {\em The Journal of
  Machine Learning Research}, vol.~16, no.~1, pp.~285--322, 2015.

\bibitem{patrascu2015efficient}
A.~Patrascu and I.~Necoara, ``Efficient random coordinate descent algorithms
  for large-scale structured nonconvex optimization,'' {\em Journal of Global
  Optimization}, vol.~61, no.~1, pp.~19--46, 2015.

\bibitem{hsieh2015passcode}
C.-J. Hsieh, H.-F. Yu, and I.~S. Dhillon, ``Passcode: Parallel asynchronous
  stochastic dual co-ordinate descent.,'' in {\em ICML}, pp.~2370--2379, 2015.

\bibitem{coddington1997random}
P.~D. Coddington, ``Random number generators for parallel computers,'' {\em The
  NHSE Review}, 1997.

\bibitem{thomas2009comparison}
D.~B. Thomas, L.~Howes, and W.~Luk, ``A comparison of cpus, gpus, fpgas, and
  massively parallel processor arrays for random number generation,'' in {\em
  Proceedings of the ACM/SIGDA international symposium on Field programmable
  gate arrays}, pp.~63--72, 2009.

\bibitem{floudas2001encyclopedia}
C.~A. Floudas and P.~M. Pardalos, {\em Encyclopedia of optimization}, vol.~1.
\newblock Springer Science \& Business Media, 2001.

\bibitem{lee2018random}
C.-P. Lee and S.~J. Wright, ``Random permutations fix a worst case for cyclic
  coordinate descent,'' {\em IMA Journal of Numerical Analysis}, vol.~39,
  no.~3, pp.~1246--1275, 2018.

\bibitem{glowinski1975approximation}
R.~Glowinski and A.~Marroco, ``Approximation par \'{e}l\'{e}ments finis d'ordre
  un, et la r\'{e}solution, par p\'{e}nalisation-dualit\'{e} d'une classe de
  probl\`{e}mes de dirichlet non lin\'{e}aires,'' {\em ESAIM: Mathematical
  Modelling and Numerical Analysis-Mod{\'e}lisation Math{\'e}matique et Analyse
  Num{\'e}rique}, vol.~9, no.~R2, pp.~41--76, 1975.

\bibitem{chan1978finite}
T.~F.~C. Chan and R.~Glowinski, {\em Finite element approximation and iterative
  solution of a class of mildly non-linear elliptic equations}.
\newblock Computer Science Department, Stanford University Stanford, 1978.

\bibitem{gabay1976dual}
D.~Gabay and B.~Mercier, ``A dual algorithm for the solution of nonlinear
  variational problems via finite element approximation,'' {\em Computers \&
  Mathematics with Applications}, vol.~2, no.~1, pp.~17--40, 1976.

\bibitem{boyd2011distributed}
S.~Boyd, N.~Parikh, and E.~Chu, {\em Distributed optimization and statistical
  learning via the alternating direction method of multipliers}.
\newblock Now Publishers Inc, 2011.

\bibitem{tseng2001convergence}
P.~Tseng, ``Convergence of a block coordinate descent method for
  nondifferentiable minimization,'' {\em Journal of optimization theory and
  applications}, vol.~109, no.~3, pp.~475--494, 2001.

\bibitem{chen2016direct}
C.~Chen, B.~He, Y.~Ye, and X.~Yuan, ``The direct extension of admm for
  multi-block convex minimization problems is not necessarily convergent,''
  {\em Mathematical Programming}, vol.~155, no.~1-2, pp.~57--79, 2016.

\bibitem{hong2012linear}
M.~Hong and Z.-Q. Luo, ``On the linear convergence of the alternating direction
  method of multipliers,'' {\em arXiv preprint arXiv:1208.3922}, 2012.

\bibitem{han2012note}
D.~Han and X.~Yuan, ``A note on the alternating direction method of
  multipliers,'' {\em Journal of Optimization Theory and Applications},
  vol.~155, no.~1, pp.~227--238, 2012.

\bibitem{chen2013convergence}
C.~Chen, Y.~Shen, and Y.~You, ``On the convergence analysis of the alternating
  direction method of multipliers with three blocks,'' in {\em Abstract and
  Applied Analysis}, vol.~2013, Hindawi Publishing Corporation, 2013.

\bibitem{he2013proximal}
B.~He, H.-K. Xu, and X.~Yuan, ``On the proximal jacobian decomposition of alm
  for multiple-block separable convex minimization problems and its
  relationship to admm,'' 2013.

\bibitem{he2013full}
B.~He, L.~Hou, and X.~Yuan, ``On full jacobian decomposition of the augmented
  lagrangian method for separable convex programming,'' {\em SIAM Journal on
  Optimization}, vol.~25, no.~4, pp.~2274--2312, 2015.

\bibitem{lin2014convergence}
T.~Lin, S.~Ma, and S.~Zhang, ``On the convergence rate of multi-block admm,''
  {\em arXiv preprint arXiv:1408.4265}, vol.~229, 2014.

\bibitem{hong2014block}
M.~Hong, T.-H. Chang, X.~Wang, M.~Razaviyayn, S.~Ma, and Z.-Q. Luo, ``A block
  successive upper bound minimization method of multipliers for linearly
  constrained convex optimization,'' {\em arXiv preprint arXiv:1401.7079},
  2014.

\bibitem{cai2014direct}
X.~Cai, D.~Han, and X.~Yuan, ``The direct extension of {ADMM} for three-block
  separable convex minimization models is convergent when one function is
  strongly convex,'' {\em Optimization Online}, 2014.

\bibitem{sun2014convergent}
D.~Sun, K.-C. Toh, and L.~Yang, ``A convergent proximal alternating direction
  method of multipliers for conic programming with 4-block constraints,'' {\em
  arXiv preprint arXiv:1404.5378}, 2014.

\bibitem{lin2014global}
T.~Lin, S.~Ma, and S.~Zhang, ``On the global linear convergence of the admm
  with multi-block variables,'' {\em arXiv preprint arXiv:1408.4266}, 2014.

\bibitem{han2014augmented}
D.~Han, X.~Yuan, and W.~Zhang, ``An augmented lagrangian based parallel
  splitting method for separable convex minimization with applications to image
  processing,'' {\em Mathematics of Computation}, vol.~83, no.~289,
  pp.~2263--2291, 2014.

\bibitem{li2014schur}
X.~Li, D.~Sun, and K.-C. Toh, ``A schur complement based semi-proximal admm for
  convex quadratic conic programming and extensions,'' {\em Mathematical
  Programming}, pp.~1--41, 2014.

\bibitem{li2015convergent}
M.~Li, D.~Sun, and K.-C. Toh, ``A convergent 3-block semi-proximal admm for
  convex minimization problems with one strongly convex block,'' {\em
  Asia-Pacific Journal of Operational Research}, p.~1550024, 2015.

\bibitem{lin2015iteration}
T.~Lin, S.~Ma, and S.~Zhang, ``Iteration complexity analysis of multi-block
  admm for a family of convex minimization without strong convexity,'' {\em
  arXiv preprint arXiv:1504.03087}, 2015.

\bibitem{deng2017parallel}
W.~Deng, M.-J. Lai, Z.~Peng, and W.~Yin, ``Parallel multi-block admm with ${O}
  (1/k)$ convergence,'' {\em Journal of Scientific Computing}, vol.~71, no.~2,
  pp.~712--736, 2017.

\bibitem{he2012alternating}
B.~He, M.~Tao, and X.~Yuan, ``Alternating direction method with gaussian back
  substitution for separable convex programming,'' {\em SIAM Journal on
  Optimization}, vol.~22, no.~2, pp.~313--340, 2012.

\bibitem{he2012convergence}
B.~He, M.~Tao, and X.~Yuan, ``Convergence rate and iteration complexity on the
  alternating direction method of multipliers with a substitution procedure for
  separable convex programming,'' {\em Preprint 3611, Optimization Online},
  2012.

\bibitem{li2016schur}
X.~Li, D.~Sun, and K.-C. Toh, ``A schur complement based semi-proximal admm for
  convex quadratic conic programming and extensions,'' {\em Mathematical
  Programming}, vol.~155, no.~1-2, pp.~333--373, 2016.

\bibitem{chen2017efficient}
L.~Chen, D.~Sun, and K.-C. Toh, ``An efficient inexact symmetric gauss--seidel
  based majorized admm for high-dimensional convex composite conic
  programming,'' {\em Mathematical Programming}, vol.~161, no.~1-2,
  pp.~237--270, 2017.

\bibitem{sun2015expected}
R.~Sun, Z.-Q. Luo, and Y.~Ye, ``On the expected convergence of randomly
  permuted admm,'' {\em arXiv preprint arXiv:1503.06387}, 2015.

\bibitem{recht2012beneath}
B.~Recht and C.~R{\'e}, ``Beneath the valley of the noncommutative
  arithmetic-geometric mean inequality: conjectures, case-studies, and
  consequences,'' {\em arXiv preprint arXiv:1202.4184}, 2012.

\bibitem{wright2017analyzing}
S.~J. Wright and C.-P. Lee, ``Analyzing random permutations for cyclic
  coordinate descent,'' {\em arXiv preprint arXiv:1706.00908}, 2017.

\bibitem{gurbuzbalaban2015random}
M.~G{\"u}rb{\"u}zbalaban, A.~Ozdaglar, and P.~A. Parrilo, ``Why random
  reshuffling beats stochastic gradient descent,'' {\em Mathematical
  Programming}, pp.~1--36, 2019.

\bibitem{gurbuzbalaban2018randomness}
M.~G{\"u}rb{\"u}zbalaban, A.~Ozdaglar, N.~D. Vanli, and S.~J. Wright,
  ``Randomness and permutations in coordinate descent methods,'' {\em
  Mathematical Programming}, vol.~181, no.~2, pp.~349--376, 2020.

\bibitem{chen2015convergence}
C.~Chen, M.~Li, X.~Liu, and Y.~Ye, ``On the convergence of multi-block
  alternating direction method of multipliers and block coordinate descent
  method,'' {\em arXiv preprint arXiv:1508.00193}, 2015.

\bibitem{sun2015improved}
R.~Sun and M.~Hong, ``Improved iteration complexity bounds of cyclic block
  coordinate descent for convex problems,'' in {\em Advances in Neural
  Information Processing Systems}, pp.~1306--1314, 2015.

\bibitem{li2017faster}
X.~Li, T.~Zhao, R.~Arora, H.~Liu, and M.~Hong, ``On faster convergence of
  cyclic block coordinate descent-type methods for strongly convex
  minimization,'' {\em The Journal of Machine Learning Research}, vol.~18,
  no.~1, pp.~6741--6764, 2017.

\bibitem{angelos1992triangular}
J.~R. Angelos, C.~C. Cowen, and S.~K. Narayan, ``Triangular truncation and
  finding the norm of a hadamard multiplier,'' {\em Linear algebra and its
  applications}, vol.~170, pp.~117--135, 1992.

\bibitem{Sherman1950}
J.~Sherman and W.~J. Morrison, ``{Adjustment of an Inverse Matrix Corresponding
  to a Change in One Element of a Given Matrix},'' {\em The Annals of
  Mathematical Statistics}, vol.~21, pp.~124--127, mar 1950.

\bibitem{Sra2014}
S.~Sra, ``Explicit diagonalization of an anti-triangular cesar\'o matrix,''
  2014.

\end{thebibliography}

\end{document}